 \theoremstyle{plain}
\newtheorem{theorem}{Theorem}
\newtheorem{corollary}{Corollary}
\newtheorem{lemma}{Lemma}
\newtheorem{proposition}{Proposition}
\newtheorem{example}{Example}
\theoremstyle{definition}
\newtheorem{definition}{Definition}
\theoremstyle{remark}
\numberwithin{equation}{section}
\newcommand{\bT}{\begin{theorem}}
\newcommand{\eT}{\end{theorem}}
\newcommand{\bProp}{\begin{proposition}}
\newcommand{\eProp}{\end{proposition}}
\newcommand{\bE}{\begin{example}}
\newcommand{\eE}{\end{example}}
\newcommand{\bL}{\begin{lemma}}
\newcommand{\eL}{\end{lemma}}
\newcommand{\bP}{\begin{proof}}
\newcommand{\eP}{\end{proof}}
\newcommand{\bC}{\begin{corollary}}
\newcommand{\eC}{\end{corollary}}
\newcommand{\bD}{\begin{definition}}
\newcommand{\eD}{\end{definition}}
\newcommand{\be}{\begin{enumerate}}
\newcommand{\ee}{\end{enumerate}}
\newcommand{\beqa}{\begin{eqnarray*}}
\newcommand{\eeqa}{\end{eqnarray*}}
\newcommand{\beqaa}{\begin{eqnarray}}
\newcommand{\eeqaa}{\end{eqnarray}}
\newcommand{\ba}{\begin{array}}
\newcommand{\ea}{\end{array}}
\newdimen\plusheight
\def\+{\;\lower\plusheight\hbox{$+$}\;}
\newdimen\minusheight
\def\-{\;\lower\minusheight\hbox{$-$}\;}
\newdimen\cdotsheight
\def\cds{\lower\cdotsheight\hbox{$\cdots$}}
\begin{document}

% Your \newcommans below (if there are any):

%\oddsidemargin 16.5mm
%\evensidemargin 16.5mm

\thispagestyle{plain}

%\begin{center}
%{\large \sc  Applicable Analysis and Discrete Mathematics}

%{\small available online at  http:/\!/pefmath.etf.bg.ac.yu }
%\end{center}

%\noindent{\small{\sc  Appl. Anal. Discrete Math.\ }{\bf x} (xxxx),
%xxx--xxx.} \hfill{\scriptsize doi:10.2298/AADMxxxxxxxx}

\vspace{5cc}
\begin{center}

{\Large\bf  A NEW SUMMATION FORMULA FOR WP-BAILEY PAIRS
\rule{0mm}{6mm}\renewcommand{\thefootnote}{}%Enter at least one, but not more than 3 MSCs.
% First entered MSC will be a primary one, others (at most 2) will be secondary.
\footnotetext{\scriptsize 2000 Mathematics Subject Classification: 33D15, 11B65\\
Keywords and Phrases: Bailey pairs, WP-Bailey Chains, WP-Bailey pairs, Lambert Series, Basic Hypergeometric Series.
}}

\vspace{1cc}
{\large\it James Mc Laughlin}

\vspace{1cc}
\parbox{24cc}{{\scriptsize\bf

%Short abstract here (not more than 7 lines of text)
%
%Avoid citations in the  abstract, but if they are necessary, do not use labels from the
%main body of the article. For example, instead of  writing "formula (2)" , you
%should reproduce formula (2), or describe it in words. Instead of writing
%"the result  obtained in [1]" , you should write "the result obtained in
%{\sc J. A. Baker:}  {\it Isometries in normed spaces,}
%Amer. Math. Monthly {\bf 78} (1971), 655--658".

Let $(\alpha_n(a,k),\beta_n(a,k))$ be a WP-Bailey pair. Assuming the
limits exist, let
\[
(\alpha_n^*(a),\beta_n^*(a))_{n\geq 1} = \lim_{k \to
1}\left(\alpha_n(a,k),\frac{\beta_n(a,k)}{1-k}\right)_{n\geq 1}
\]
be the \emph{derived} WP-Bailey pair. By considering a particular
limiting case of a transformation due to George Andrews, we derive
new basic hypergeometric summation and transformation formulae involving derived WP-Bailey
pairs.

We then use these formulae to derive new identities for various theta
series/products which are expressible  in terms of certain types of
Lambert series.

}}
\end{center}

\vspace{1cc}

% The paper should have at least two sections

\vspace{1.5cc}
\begin{center}
{\bf 1. Introduction}
\end{center}

%\section{Introduction}

In the present paper we describe a new transformation involving WP-Bailey pairs,
 and describe some of the implications of this transformation. In
 particular, we show how various theta functions which have
 representations in terms of certain kinds of Lambert series, may
 also be represented by basic hypergeometric series involving an
 arbitrary derived WP-Bailey pair. Throughout the paper we employ the usual notations {\allowdisplaybreaks
\begin{align*}
           (a;q)_n &:= (1-a)(1-aq)\cdots (1-aq^{n-1}), \\
          (a_1, a_2, \dots, a_j; q)_n &:= (a_1;q)_n (a_2;q)_n \cdots (a_j;q)_n ,\\
           (a;q)_\infty &:= (1-a)(1-aq)(1-aq^2)\cdots, \mbox{ and }\\
          (a_1, a_2, \dots, a_j; q)_\infty &:= (a_1;q)_\infty (a_2;q)_\infty \cdots (a_j;q)_\infty,
\end{align*}}
and we also assume throughout that $|q|<1$.

Before describing the new results, we first discuss some of the background.
A \emph{WP-Bailey pair} is a pair of sequences $(\alpha_{n}(a,k,q)$,
$\beta_{n}(a,k,q))$
%(if the context is clear, we occasionally
%suppress the dependence on some or all of $a$, $k$ and $q$)
satisfying $\alpha_{0}(a,k,q)$ $=\beta_{0}(a,k,q)=1$ and
{\allowdisplaybreaks
\begin{align}\label{WPpair}
\beta_{n}(a,k,q) &= \sum_{j=0}^{n}
\frac{(k/a;q)_{n-j}(k;q)_{n+j}}{(q;q)_{n-j}(aq;q)_{n+j}}\alpha_{j}(a,k,q).
\end{align}
}
%If $k=0$, then the pair of sequences $(\alpha_n(a,q),\beta_n(a,q))$ is called a \emph{Bailey pair with respect to $a$}.

Andrews   \cite{A01} described two methods (see \eqref{wpn2} below for more details of one of these methods) of constructing new WP-bailey pairs from existing pairs. This type of construction is termed a \emph{WP-Bailey chain}, since the process may be iterated to produce a chain of WP-Bailey pairs
\[
(\alpha_{n},\,\beta_{n})\to(\alpha_{n}',\,\beta_{n}')\to
(\alpha_{n}'',\,\beta_{n}'') \to \dots
\]
from any initial pair. These two chains together allow a ``tree" of WP-Bailey pairs to be generated from a single initial pair.

The implications of these two branches of the WP-Bailey tree were further investigated by Andrews and Berkovich in \cite{AB02}. Spiridonov
\cite{S02} derived an elliptic generalization of Andrews first
WP-Bailey chain. Four additional branches were added to
the WP-Bailey tree by Warnaar \cite{W03}, two of which had generalizations to the
elliptic level. More recently,  Liu and Ma \cite{LM08} introduced the idea of a general
WP-Bailey chain, and
added one new branch to the WP-Bailey tree. In \cite{MZ09}, the authors added three new WP-Bailey chains. Of course the special case $k=0$ of a WP-Bailey pair  (a \emph{Bailey pair with respect to $a$}) had been studied for some time prior to Andrews' paper \cite{A01}, and indeed Bailey pairs were used by Slater \cite{S51, S52} to produce her famous list of 130 identities of Rogers-Ramanujan type.

Andrews \cite{A01}   second WP-Bailey chain may be described as follows. If
$(\alpha_{n}(a,k),$ $\beta_{n}(a,k))$ satisfy \eqref{WPpair}, then so
does $(\tilde{\alpha}_{n}(a,k),\,\tilde{\beta}_{n}(a,k))$,
%{\allowdisplaybreaks
\begin{align}\label{wpn2}
\tilde{\alpha}_{n}(a,k)&= \frac{(qa^2/k)_{2n}}{(k)_{2n}}\left
(\frac{k^2}{q a^2} \right)^n\alpha_{n} \left(a, \frac{q a^2}{k}
\right), \\
\tilde{\beta}_{n}(a,k)&=\sum_{j=0}^{n}
\frac{(k^2/qa^2)_{n-j}}{(q)_{n-j}}\left (\frac{k^2}{q a^2}
\right)^j\beta_{j} \left(a, \frac{q a^2}{k} \right). \notag
\end{align}
%}

It is not difficult to show that \eqref{wpn2} implies (see Corollary
1 in \cite{MZ09b}, for example) that if $(\alpha_n(a,k),
\beta_n(a,k))$ satisfy \eqref{WPpair}, then subject to suitable
convergence conditions,
\begin{multline}\label{wpbteq2b}
\sum_{n=0}^{\infty}  \left(\frac{q a^2}{k^2}\right)^n \beta_n(a,k)\\
= \frac{(qa/k,qa^2/k;q)_{\infty}} {(q a,qa^2/k^2;q)_{\infty}}
\sum_{n=0}^{\infty} \frac{(k;q)_{2n}} {(q a^2/k;q)_{2n}}\left (
\frac{q a^2}{k^2}\right)^n \alpha_n(a,k).
\end{multline}

Indeed, any WP-Bailey chain, including Andrews \cite{A01} first chain, those of Warnaar \cite{W03}, Liu and Ma \cite{LM08} and Mc Laughlin and Zimmer \cite{MZ09} will lead to a similar transformation relating WP-Bailey pairs.

Beginning with Andrews first WP-Bailey chain,
 the present author in \cite{McL09a} derived two new types of transformations relating WP-Bailey pairs.
\begin{theorem}\label{t01}
If $(\alpha_n(a,k), \beta_n(a,k))$ is a WP-Bailey pair, then subject
to suitable convergence conditions, {\allowdisplaybreaks
\begin{multline}\label{wpeq8}
\sum_{n=1}^{\infty} \frac{(q\sqrt{k},-q\sqrt{k},z;q)_{n}(q;q)_{n-1}}
{\left(\sqrt{k},-\sqrt{k}, q k,\frac{q k}{z};q\right)_{n}}\left( \frac{q a}{ z }\right )^{n} \beta_n(a,k)\\
- \sum_{n=1}^{\infty}
\frac{\left(q\sqrt{\frac{1}{k}},-q\sqrt{\frac{1}{k}},\frac{1}{z};q\right)_{n}(q;q)_{n-1}}
{\left(\sqrt{\frac{1}{k}},-\sqrt{\frac{1}{k}}, \frac{q }{k},\frac{q
z}{k};q\right)_{n}}\left( \frac{q z}{ a }\right )^{n}
\beta_n\left(\frac{1}{a},\frac{1}{k}\right) -\\
 \sum_{n=1}^{\infty}\frac{(z;q)_{n}(q;q)_{n-1}}{\left(q a ,\frac{q
a}{z};q\right)_n}\left (\frac{q a}{z}\right)^{n}\alpha_n(a,k)
%\\
+
\sum_{n=1}^{\infty}\frac{\left(\frac{1}{z};q\right)_{n}(q;q)_{n-1}}{\left(\frac{q}{a}
,\frac{q z}{a};q\right)_n}\left (\frac{q
z}{a}\right)^{n}\alpha_n\left(\frac{1}{a},\frac{1}{k}\right)\\=
\frac{(a-k)\left(1-\frac{1}{z}\right)\left(1-\frac{ak}{z}\right)}
{(1-a)(1-k)\left(1-\frac{a}{z}\right)\left(1-\frac{k}{z}\right)}
+\frac{z}{k}\frac{\left(z,\frac{q}{z},\frac{k}{a},
\frac{qa}{k},\frac{ak}{z},\frac{qz}{ak},q,q;q\right)_{\infty}}
{\left(\frac{z}{k},\frac{qk}{z},\frac{z}{a},
\frac{qa}{z},a,\frac{q}{a},k,\frac{q}{k};q\right)_{\infty}}.
\end{multline}
}
\end{theorem}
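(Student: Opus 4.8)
The plan is to exploit the manifest antisymmetry of \eqref{wpeq8} under the simultaneous inversion $a\mapsto 1/a$, $k\mapsto 1/k$, $z\mapsto 1/z$. Writing the four sums on the left-hand side as $S_1,S_2,S_3,S_4$ in order, one checks directly that $S_2$ is obtained from $S_1$, and $S_4$ from $S_3$, by this inversion (for instance $qk/z\mapsto qz/k$, $(z;q)_n\mapsto(1/z;q)_n$, and $\beta_n(a,k)\mapsto\beta_n(1/a,1/k)$). Hence, setting $P(a,k,z):=S_1-S_3$, the entire left-hand side equals $P(a,k,z)-P(1/a,1/k,1/z)$. The first task is therefore to understand $P$, and the second is to evaluate the resulting difference in closed form.

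For $P$ I would first rewrite the coefficient of $\beta_n(a,k)$ in $S_1$ using the very-well-poised simplification $\tfrac{(q\sqrt k,-q\sqrt k;q)_n}{(\sqrt k,-\sqrt k;q)_n}=\tfrac{1-kq^{2n}}{1-k}$, then substitute the defining relation \eqref{WPpair} for $\beta_n(a,k)$ and interchange the order of summation. For each fixed $j\ge1$ the inner sum over $n$ is, after the shift $n=j+m$, a nonterminating very-well-poised ${}_6\phi_5$ in $m$ with base $kq^{2j}$ and argument exactly $qa/z$; summing it by the ${}_6\phi_5$ summation and simplifying the infinite products collapses the coefficient of $\alpha_j(a,k)$ precisely to its coefficient in $S_3$. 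Consequently $S_1=C_0+S_3$, where $C_0$ is the contribution of $\alpha_0=1$, so that $P(a,k,z)=C_0(a,k,z)$ is a closed expression independent of the chosen WP-Bailey pair. Explicitly $C_0(a,k,z)=\sum_{n\ge1}\frac{1-kq^{2n}}{(1-q^n)(1-kq^n)}\frac{(z,k/a;q)_n}{(qk/z,aq;q)_n}\left(\frac{qa}{z}\right)^n$, and \eqref{wpeq8} reduces to the pair-free identity $C_0(a,k,z)-C_0(1/a,1/k,1/z)=\RHS$.

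The remaining, and hardest, step is to evaluate this difference. The factor $\frac{1-kq^{2n}}{(1-q^n)(1-kq^n)}=\frac{1}{1-q^n}+\frac{1}{1-kq^n}-1$ exhibits each $C_0$ as a combination of Lambert-type series, which is exactly the bridge to the theta/Lambert material promised in the introduction. My plan is to assemble the two tails $C_0(a,k,z)$ and $-C_0(1/a,1/k,1/z)$ into a single bilateral very-well-poised series and to sum it by Bailey's ${}_6\psi_6$ summation, the eight-fold theta quotient on the right of \eqref{wpeq8} (together with the prefactor $z/k$) being the expected product side. The main obstacle is that the two tails are not exact mirror images: re-indexing $C_0(1/a,1/k,1/z)$ by $n\mapsto-n$ produces a summand that differs from the analytic continuation of the $C_0(a,k,z)$ summand by a uniform $q$-shift of the parameters, so the bilateral series must be formed with care. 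I expect the matched part to furnish the ${}_6\psi_6$ product, while the mismatch between the tails — equivalently, the $n=0$ data needed to complete each unilateral sum to a bilateral one — telescopes to the rational first term of $\RHS$. Verifying that these two pieces recombine into exactly the stated eight-fold product and the rational function, under the convergence hypothesis $|qa/z|<1$ inherited from the ${}_6\phi_5$ step, is the principal computational burden of the proof.
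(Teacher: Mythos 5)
Your opening reduction is correct, and it is worth saying so precisely. The paper itself imports Theorem \ref{t01} from \cite{McL09a} rather than proving it in-text, but the intended argument is visible in the machinery the paper displays: one shows that $S_1-S_3$ is independent of the WP-Bailey pair and equals the Lambert-series combination $f(a,k,z,q)$ of \eqref{wpeq5} (in the paper's framework this comes from a limiting case $\rho_1=z$, $\rho_2\to 1$ of the transformation attached to Andrews' first chain, by the same logarithmic-differentiation device $-G'(1)$ used in the proof of Lemma \ref{p1}), and then one quotes the Andrews--Lewis--Liu identity \eqref{wpeq7}, whose right-hand side is verbatim the right-hand side of \eqref{wpeq8}. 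Your symmetry observation (LHS $=P(a,k,z)-P(1/a,1/k,1/z)$ with $P=S_1-S_3$) is exactly this structure, and your step establishing $P=C_0$ is sound: substituting \eqref{WPpair}, shifting $n=j+m$, and applying the nonterminating ${}_6\phi_5$ summation with very-well-poised parameter $kq^{2j}$ and lower parameters $q^j$, $zq^j$, $k/a$ (argument $qa/z$, whence the condition $|qa/z|<1$) does collapse the coefficient of $\alpha_j$, $j\geq 1$, to $\frac{(z;q)_j(q;q)_{j-1}}{(qa,qa/z;q)_j}\left(\frac{qa}{z}\right)^j$, leaving only the $j=0$ contribution $C_0$. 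This is a legitimate direct-summation substitute for the paper's chain-limit argument.

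The genuine gap is your final step, which is where the theorem's actual content (the rational term plus the eight-fold theta quotient) must be produced, and your plan for it cannot work as stated. First, the summand of $C_0$ contains the factor $1/(1-q^n)$, so any bilateral completion has a pole at $n=0$; it is therefore not the summand of any ${}_6\psi_6$ (Lambert-type series of this kind arise as limits or derivatives of ${}_6\psi_6$'s, not as instances of them). Second, as you yourself verify, continuing the summand of $C_0(a,k,z)$ to $-n$ gives $-\frac{1-q^{2n}/k}{(1-q^n)(1-q^n/k)}\frac{(z/k,1/a;q)_n}{(q/z,qa/k;q)_n}\left(\frac{qa}{z}\right)^n$, which does not match the summand $\frac{1-q^{2n}/k}{(1-q^n)(1-q^n/k)}\frac{(1/z,a/k;q)_n}{(qz/k,q/a;q)_n}\left(\frac{qz}{a}\right)^n$ of $C_0(1/a,1/k,1/z)$: the parameters and even the argument ($qa/z$ versus $qz/a$) disagree, so there is no single bilateral series whose two tails are these two sums. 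Saying you ``expect'' the matched part to give the product and the mismatch to telescope to the rational term is a hope, not an argument, and it is precisely at this point that all the difficulty of \eqref{wpeq8} is concentrated. To close the gap you need either (i) to prove $C_0(a,k,z)=f(a,k,z,q)$ — your partial-fraction identity alone does not do this, since the hypergeometric factor $\frac{(z,k/a;q)_n}{(qk/z,aq;q)_n}\left(\frac{qa}{z}\right)^n$ remains attached; the efficient route is to run your ${}_6\phi_5$ argument with $\rho_2$ generic so that the product side survives, and then extract $-G'(1)$ as in Lemma \ref{p1} — and then prove or cite \eqref{wpeq7}; or (ii) to supply an honest bilateral/theta-function argument replacing \eqref{wpeq7}. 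As written, the theta-product evaluation is missing, so the proof is incomplete.
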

\begin{theorem}\label{c3}
If $(\alpha_n(a,k,q), \beta_n(a,k,q))$ is a WP-Bailey pair, then subject
to suitable convergence conditions, {\allowdisplaybreaks
\begin{multline}\label{wpeq2n}
\sum_{n=1}^{\infty} \frac{(1-k q^{2n})(z;q)_{n}(q;q)_{n-1}}
{(1-k)( q k,q k/z;q)_{n}}\left( \frac{q a}{ z }\right )^{n} \beta_n(a,k,q)\\
+  \sum_{n=1}^{\infty} \frac{(1+k q^{2n})(z;q)_{n}(q;q)_{n-1}}
{(1+k)( -q k,-q k/z;q)_{n}}\left( \frac{-q a}{ z }\right )^{n}
\beta_n(-a,-k,q)\\-2 \sum_{n=1}^{\infty} \frac{(1-k^2
q^{4n})(z^2;q^2)_{n}(q^2;q^2)_{n-1}} {(1-k^2)( q^2 k^2,q^2
k^2/z^2;q^2)_{n}}\left( \frac{q^2 a^2}{ z^2 }\right )^{n}
\beta_n(a^2,k^2,q^2)\\=
\sum_{n=1}^{\infty}\frac{(z;q)_{n}(q;q)_{n-1}}{(q a ,q
a/z;q)_n}\left (\frac{q a}{z}\right)^{n}\alpha_n(a,k,q)\\ +
\sum_{n=1}^{\infty}\frac{(z;q)_{n}(q;q)_{n-1}}{(-q a ,-q
a/z;q)_n}\left (\frac{-q a}{z}\right)^{n}\alpha_n(-a,-k,q)\\-2
\sum_{n=1}^{\infty}\frac{(z^2;q^2)_{n}(q^2;q^2)_{n-1}}{(q^2 a^2 ,q^2
a^2/z^2;q^2)_n}\left (\frac{q^2
a^2}{z^2}\right)^{n}\alpha_n(a^2,k^2,q^2).
\end{multline}
}
\end{theorem}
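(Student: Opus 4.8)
The plan is to extract from \eqref{WPpair} a single-pair ``master'' transformation and then obtain \eqref{wpeq2n} from it by a weighted three-fold specialisation. First I would substitute \eqref{WPpair} into the first $\beta$-sum on the left of \eqref{wpeq2n},
\[
S_q(a,k;z):=\sum_{n=1}^{\infty}\frac{(1-kq^{2n})(z;q)_{n}(q;q)_{n-1}}{(1-k)(qk,qk/z;q)_{n}}\Big(\frac{qa}{z}\Big)^{n}\beta_{n}(a,k,q),
\]
and interchange the order of the resulting double sum. For each fixed $j\ge 1$, after the shift $n\mapsto n+j$ and the standard splittings $(q;q)_{n+j-1}=(q;q)_{j-1}(q^{j};q)_{n}$, $(k;q)_{n+2j}=(k;q)_{2j}(kq^{2j};q)_{n}$ and $(aq;q)_{n+2j}=(aq;q)_{2j}(aq^{2j+1};q)_{n}$, the inner sum over $n$ becomes a very-well-poised ${}_{6}\phi_{5}$ in base $q$ with $A=kq^{2j}$, $B=zq^{j}$, $C=q^{j}$, $D=k/a$ and argument $Aq/(BCD)=qa/z$. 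Summing it by Rogers' ${}_{6}\phi_{5}$ evaluation and simplifying the resulting infinite products --- the $j$-dependent ones cancelling against the finite Pochhammer symbols and the $j$-independent factor collapsing to $1-k$, which cancels the prefactor $1/(1-k)$ --- I expect the coefficient of $\alpha_{j}(a,k,q)$ to reduce \emph{exactly} to $(z;q)_{j}(q;q)_{j-1}(qa/z)^{j}/(qa,qa/z;q)_{j}$, precisely the coefficient of $\alpha_{j}$ in the first $\alpha$-sum of \eqref{wpeq2n}.

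Since $\alpha_{0}=1$ while the $\alpha$-sums in \eqref{wpeq2n} begin at $n=1$, the only term left over is the $j=0$ contribution to $S_q$, which is independent of the Bailey pair. This gives the master transformation
\[
S_q(a,k;z)-\sum_{n=1}^{\infty}\frac{(z;q)_{n}(q;q)_{n-1}}{(qa,qa/z;q)_{n}}\Big(\frac{qa}{z}\Big)^{n}\alpha_{n}(a,k,q)=E_q(a,k;z),
\]
where the ``error'' is the $j=0$ piece of the $\beta$-sum,
\[
E_q(a,k;z):=\sum_{n=1}^{\infty}\frac{1-kq^{2n}}{(1-k)(1-q^{n})}\,\frac{(z;q)_{n}(k/a;q)_{n}(k;q)_{n}}{(qk;q)_{n}(qk/z;q)_{n}(aq;q)_{n}}\Big(\frac{qa}{z}\Big)^{n}.
\]
The factor $1/(1-q^{n})=(q;q)_{n-1}/(q;q)_{n}$ displays $E_q(a,k;z)$ as a Lambert-type series, consistent with the theme of the paper. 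Now \eqref{wpeq2n} is exactly this master transformation written for the three parameter sets $(a,k,q)$, $(-a,-k,q)$ and $(a^{2},k^{2},q^{2})$ (the last with $z$ replaced by $z^{2}$) and added with weights $1,1,-2$: the three $\beta$-sums are the three sums on the left of \eqref{wpeq2n}, the three $\alpha$-sums are those on the right, and the identity collapses to the single requirement
\[
E_q(a,k;z)+E_q(-a,-k;z)-2\,E_{q^{2}}(a^{2},k^{2};z^{2})=0.
\]

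This error cancellation is the crux and the step I expect to be the main obstacle. It is a pure $q$-series (Lambert-type) identity, free of any Bailey pair, and is equivalent to the base-doubling quadratic transformation $E_q(a,k;z)+E_q(-a,-k;z)=2\,E_{q^{2}}(a^{2},k^{2};z^{2})$. I would prove it by writing the two base-$q$ series over a common index, applying the duplication relations $(x;q)_{n}(-x;q)_{n}=(x^{2};q^{2})_{n}$ to the factors that are interchanged by $(a,k)\mapsto(-a,-k)$, and reassembling the surviving even part as the base-$q^{2}$ series $E_{q^{2}}(a^{2},k^{2};z^{2})$; a power-series check in $q$ supports this, the $O(q)$ terms of $E_q(a,k;z)+E_q(-a,-k;z)$ cancelling and its $O(q^{2})$ coefficient agreeing with that of $2E_{q^{2}}(a^{2},k^{2};z^{2})$. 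This parallels Theorem~\ref{t01}, where the same master transformation is instead combined over $(a,k)$ and $(1/a,1/k)$ and the two error terms assemble into the explicit theta quotient on the right of \eqref{wpeq8} rather than cancelling to $0$.
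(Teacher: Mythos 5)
Your first step is sound, and it is in essence how this result is actually obtained: the theorem is not proved in the present paper at all, but quoted from the companion paper \cite{McL09a}, whose key lemma is produced by exactly the computation you describe (substitute \eqref{WPpair} into the $\beta$-sum, interchange the order of summation, and evaluate the inner sum by Rogers' $_6\phi_5$ summation with the parameters you list; the coefficient of $\alpha_j$ does indeed collapse to $(z;q)_j(q;q)_{j-1}(qa/z)^j/(qa,qa/z;q)_j$, and your reduction of \eqref{wpeq2n} to the vanishing of $E_q(a,k;z)+E_q(-a,-k;z)-2E_{q^2}(a^2,k^2;z^2)$ is correct). The crucial difference is that the lemma in \cite{McL09a} does not leave the Bailey-pair-independent remainder as an unevaluated series: it identifies it in closed form as the Lambert-series combination $f(a,k,z,q)$ of \eqref{wpeq5}, i.e.
\begin{equation*}
E_q(a,k;z)=\sum_{n=1}^{\infty}\frac{kq^n}{1-kq^n}+\sum_{n=1}^{\infty}\frac{(a/z)q^n}{1-(a/z)q^n}-\sum_{n=1}^{\infty}\frac{aq^n}{1-aq^n}-\sum_{n=1}^{\infty}\frac{(k/z)q^n}{1-(k/z)q^n},
\end{equation*}
which is the identity of Andrews, Lewis and Liu \cite{ALL01}, cited in this paper just before \eqref{wpeq5}. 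Once that evaluation is in hand, your cancellation is immediate from $\frac{x}{1-x}-\frac{x}{1+x}=\frac{2x^2}{1-x^2}$ applied to each of the four Lambert series, exactly parallel to the way this paper combines Lambert series in \eqref{f1f2eq} and in the proof of Theorem \ref{t1}.

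The genuine gap is in your proposed proof of the cancellation, which you yourself flag as the crux. The identity is a statement about the three \emph{sums}, not about their terms: duplication formulas such as $(x;q)_n(-x;q)_n=(x^2;q^2)_n$ arise when the $n$-th terms of $E_q(a,k;z)$ and $E_q(-a,-k;z)$ are multiplied, not added, and after placing those two $n$-th terms over the common denominator $(1-k^2)(1-q^n)(q^2k^2,q^2k^2/z^2,a^2q^2;q^2)_n$, the result is \emph{not} twice the $n$-th term of $E_{q^2}(a^2,k^2;z^2)$. For instance, with $q=1/10$, $z=2$, $k=3/10$, $a=1/2$, the $n=1$ terms of the two base-$q$ series are approximately $-0.01220$ and $+0.01015$, summing to $-0.00205$, whereas twice the $n=1$ term of $E_{q^2}(a^2,k^2;z^2)$ is approximately $-0.00243$; equality holds only after summing over all $n$. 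Hence no rearrangement of Pochhammer symbols over a common index, and no even/odd reassembly, can close this step -- a genuine summation theorem is required, and the Andrews--Lewis--Liu evaluation above is precisely the missing ingredient. With it inserted, your argument becomes a complete proof whose architecture coincides with that of \cite{McL09a}; without it, the unproved cancellation is essentially as deep as the theorem itself.
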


The results in the present paper follow from a certain limiting case of Andrews' second chain at \eqref{wpn2} above. The main result of the present paper may be described as follows.
Define, for a WP-Bailey pair $(\alpha_n(a,k),
\beta_n(a,k))$ and $n \geq 1$,
\begin{align}\label{ab*}
\alpha_n^{*}&=\alpha_n^{*}(a)=\alpha_n^{*}(a,q) :=\lim_{k \to 1} \alpha_n(a,k),\\
\beta_n^{*}&=\beta_n^{*}(a)=\beta_n^{*}(a,q)=\lim_{k \to 1}
\frac{\beta_n(a,k)}{1-k},\notag
\end{align}
assuming the limits exist. For ease of notation we call such a pair
of sequences $(\alpha_n^*,\beta_n^*)_{n\geq 1}$ a \emph{derived} WP-Bailey
pair.  Define
\begin{multline}\label{f2eq}
f_2(a,q):=
\sum_{n=1}^{\infty}a^{2n}q^{n}\beta_n^{*}(a)-\sum_{n=1}^{\infty}a^{2n}q^{n}\beta_n^{*}(-a)\\
+\sum_{n=1}^{\infty}\frac{(q;q)_{2n-1}}{(qa^2;q)_{2n}}a^{2n}
q^{n}\alpha_n^{*}(-a)
-\sum_{n=1}^{\infty}\frac{(q;q)_{2n-1}}{(qa^2;q)_{2n}}a^{2n}
q^{n}\alpha_n^{*}(a).
\end{multline}
\begin{theorem}\label{t1}
Let $(\alpha_n(a,k),\beta_n(a,k))$ be a WP-Bailey pair and $a$ and
$b$ complex numbers such that all of the derived WP-Bailey pairs
$(\alpha_n^*,\beta_n^*)$ below exist. Let $f_2(a,q)$ be as defined
at \eqref{f2eq} and suppose further that each of series $f_2(a,q)$,
$f_2(b,q)$, $f_2(1/a,q)$, $f_2(1/b,q)$ converges. Then
\begin{multline}\label{t1eq}
f_2(a,q)-f_2(b,q)-f_2(1/a,q)+f_2(1/b,q)\\ =
\frac{2(a-b)(1+ab)}{(1-a^2)(1-b^2)}
-2a\frac{(b/a,qa/b,-ab,-q/ab;q)_{\infty}(q^2,q^2;q^2)_{\infty}}
{(a^2,q^2/a^2,b^2,q^2/b^2;q^2)_{\infty}}.
\end{multline}
\end{theorem}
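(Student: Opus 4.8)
The plan is to obtain \eqref{t1eq} as the $k \to 1$ limit of the earlier transformation \eqref{wpeq2n} from Theorem~\ref{c3}. The left-hand side of \eqref{wpeq2n} already has the structure of a difference of three sums in $\beta$ (at $(a,k)$, $(-a,-k)$, and $(a^2,k^2)$), and the right-hand side a matching difference of three sums in $\alpha$; the definition \eqref{f2eq} of $f_2(a,q)$ is evidently engineered to package exactly these limiting contributions. So the first step is to choose the free parameter $z$ appropriately — inspecting \eqref{t1eq}, the specialization $z \to a$ (or some closely related value forcing the $(z;q)_n / (qa/z;q)_n$-type factors to telescope) is the natural candidate, since it produces the rational boundary term $2(a-b)(1+ab)/((1-a^2)(1-b^2))$. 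I would substitute that value of $z$ into \eqref{wpeq2n} and then carefully take $k \to 1$ termwise.

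The second step is the limit computation itself. In each $\beta$-sum of \eqref{wpeq2n}, the factor $(1 - k q^{2n})/((1-k)(qk;q)_n \cdots)$ carries an explicit $1/(1-k)$ singularity; this is precisely the normalization by which $\beta_n^*$ is defined in \eqref{ab*}, so $\beta_n(a,k)/(1-k) \to \beta_n^*(a)$ keeps each term finite, while $(1-kq^{2n}) \to (1-q^{2n})$ and the Pochhammer symbols $(qk;q)_n \to (q;q)_n$ etc. combine with the $(q;q)_{n-1}$ numerator to yield the weight $(q;q)_{2n-1}/(qa^2;q)_{2n}$ or $a^{2n}q^n$ appearing in \eqref{f2eq}. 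The $\alpha$-sums have no $1/(1-k)$ factor and pass to the limit directly via $\alpha_n(a,k) \to \alpha_n^*(a)$. The $(-a,-k)$ and $(a^2,k^2)$ pieces similarly produce the $f_2(-a,q)$-type and squared-argument contributions; assembling all of them should reproduce $f_2(a,q)$ after the chosen specialization of $z$, and the same computation with $a$ replaced by $1/a$, $b$, $1/b$ accounts for the full combination $f_2(a,q)-f_2(b,q)-f_2(1/a,q)+f_2(1/b,q)$ on the left of \eqref{t1eq}.

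The main obstacle I anticipate is the right-hand side of \eqref{wpeq2n}, which in its stated form is purely a sum of $\alpha$-series with no closed product term, whereas \eqref{t1eq} carries an explicit infinite-product (theta) term. This suggests the product term must emerge not from \eqref{wpeq2n} directly but from the \emph{boundary/nonconvergent} behavior of the combination, or alternatively that the correct starting point is the limiting form of Theorem~\ref{t01} (equation \eqref{wpeq8}), whose right-hand side already contains exactly such a $(q;q)_\infty$-product. Indeed \eqref{wpeq8} has a rational term $(a-k)(1-1/z)(1-ak/z)/[(1-a)(1-k)(1-a/z)(1-k/z)]$ plus a theta-quotient, which under $k \to 1$ and an appropriate $z$ should degenerate into the two terms on the right of \eqref{t1eq}. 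I would therefore most likely base the proof on \eqref{wpeq8} rather than \eqref{wpeq2n}, forming the antisymmetric combination in $a \leftrightarrow b$ to cancel the unwanted $\beta^*$-contributions and isolate $f_2$; verifying that the rational and theta terms limit correctly — in particular tracking the $(1-k)$ cancellations in the denominator $(1-a)(1-k)$ and in the product $(k, q/k;q)_\infty$ using $\lim_{k\to1}(1-k)/(k;q)_\infty^{-1}$-type identities — is the delicate bookkeeping step and the part most prone to sign or normalization errors.
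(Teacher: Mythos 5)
Your instinct that \eqref{wpeq2n} cannot be the source (no product term on its right side) is correct, but the route you settle on instead — the $k\to1$ limit of \eqref{wpeq8} with a clever choice of $z$ plus an $a\leftrightarrow b$ antisymmetrization — has two fatal problems. First, a weight mismatch: \eqref{wpeq8} and \eqref{wpeq2n} both come from Andrews' \emph{first} WP-Bailey chain, and their limiting $\beta$-coefficients are $(1-q^{2n})\frac{(z;q)_n(q;q)_{n-1}}{(q,q/z;q)_n}(qa/z)^n$ (for instance $2(-a)^nq^n$ at the natural choice $z=-1$), while their limiting $\alpha$-coefficients involve $(qa,qa/z;q)_n$. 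No choice of $z$ turns these into the coefficients $a^{2n}q^n$ and $\frac{(q;q)_{2n-1}}{(qa^2;q)_{2n}}a^{2n}q^n$ that define $f_2$ in \eqref{f2eq}; those doubled-index weights are the $k\to1$ limits of Andrews' \emph{second} chain transformation \eqref{wpbteq2b}, which is exactly what Lemma \ref{p1} is for. So the series you would obtain in the limit are simply not the series $f_2(a,q)$ that the theorem is about, and your claim that \eqref{f2eq} is ``evidently engineered'' to package the limits of \eqref{wpeq2n} is false — it is engineered to package the limits of \eqref{wpbteq2b}. Second, the theta product cannot emerge the way you hope: as $k\to1$ \emph{both} terms on the right of \eqref{wpeq8} blow up like $1/(1-k)$ (the rational term through its explicit factor $1-k$, the product through $(k;q)_\infty=(1-k)(qk;q)_\infty$ in its denominator); the singular parts cancel, and the finite limit is a derivative at $k=1$, i.e., a combination of logarithmic-derivative (Lambert) series — no closed theta product survives the limit. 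The parameter you want to send to $1$ is precisely the parameter that must stay generic for the product to survive.

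The paper's proof uses two ingredients, neither of which appears in your proposal. The bridge is \eqref{f1f2eq} (a consequence of Lemma \ref{p1}, hence of the second chain): for \emph{every} derived pair, $f_2(a,q)=-\sum_{n\ge1}2aq^n/(1-a^2q^{2n})$, which is what makes the left side of \eqref{t1eq} independent of the chosen WP-Bailey pair and reduces the theorem to a statement about Lambert series. The evaluation is then the Andrews--Lewis--Liu identity \eqref{wpeq7} (Lemma 4 of \cite{McL09a}) applied at a \emph{generic} point: replace $k$ by $b$ and set $z=-1$, so that $f(a,b,-1,q)-f(1/a,1/b,-1,q)$ equals the right side of \eqref{t1eq}, while \eqref{wpeq5} and \eqref{f1f2eq} give $f(a,b,-1,q)=f_2(a,q)-f_2(b,q)$ and $f(1/a,1/b,-1,q)=f_2(1/a,q)-f_2(1/b,q)$. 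Note that $b$ enters as the second Lambert-series argument of \eqref{wpeq7}, not through antisymmetrizing a WP-Bailey transformation in $a$ and $b$; indeed your proposed antisymmetrization cancels nothing, since the $\beta^*$-series attached to $a$ and to $b$ are different series and are each \emph{meant} to survive as parts of $f_2(a,q)$ and $f_2(b,q)$.
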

We remark that one reason this result is of interest is that the
right side is independent of the particular WP-Bailey pair
$(\alpha_n(a,k),\beta_n(a,k))$  employed on  the left side. Note also
that, on the left side, the series involving $a$ are completely separate from those involving $b$, while on the right side $a$ and $b$ are inseparable in some of the infinite products.

We apply this identity, and others proved below, to derive new
series-product identities. For example, if $a$ and $b$ are non-zero complex numbers such that
$aq^n$, $bq^n \not = \pm 1$, for $n \in \mathbb{Z}$, and $|q|<\min
\{|a^2|, 1/|a^2|,|b^2|, 1/|b^2|\}$, then
%{\allowdisplaybreaks
\begin{multline*}
\sum_{n=1}^{\infty} \bigg[ \left( \frac{(1/a;q)_n}{(qa;q)_{n}}
-\frac{(-1/a;q)_n}{(-qa;q)_{n}}\right)\frac{a^{2n}q^n}{1-q^n}\\-
\left( \frac{(a;q)_n}{(q/a;q)_{n}}
-\frac{(-a;q)_n}{(-q/a;q)_{n}}\right)\frac{q^n}{a^{2n}(1-q^n)}
%\\
-\left( \frac{(1/b;q)_n}{(qb;q)_{n}}
-\frac{(-1/b;q)_n}{(-qb;q)_{n}}\right)\frac{b^{2n}q^n}{1-q^n}\\
\phantom{asdasdasdasd}+ \left( \frac{(b;q)_n}{(q/b;q)_{n}}
-\frac{(-b;q)_n}{(-q/b;q)_{n}}\right)\frac{q^n}{b^{2n}(1-q^n)}
\bigg]\\
= \frac{2(a-b)(1+ab)}{(1-a^2)(1-b^2)}
-2a\frac{(b/a,qa/b,-ab,-q/ab;q)_{\infty}(q^2,q^2;q^2)_{\infty}}
{(a^2,q^2/a^2,b^2,q^2/b^2;q^2)_{\infty}}.
\end{multline*}
%}
We also show how various identities relating theta functions to Lambert series may be replaced with a general identity involving an arbitrary derived WP-Bailey pair. For example, Ramanujan's identity
\begin{equation*}
q\psi(q^2)\psi(q^6)=\sum_{n=1}^{\infty}\frac{q^{6n-5}}{1-q^{12n-10}}
-\sum_{n=1}^{\infty}\frac{q^{6n-1}}{1-q^{12n-2}},
\end{equation*}
leads to the identity
\begin{equation*}
q\psi(q^2)\psi(q^6)=\frac{1}{2}\left(f_2(q^{-1},q^6)-f_2(q^{-5},q^6)\right),
\end{equation*}
where $f_2(a,q)$ is as defined at \eqref{f2eq} above.
Thus any derived pair $(\alpha_n^*,$
%(a,q),
$\beta_n^*
%(a,q)
)$ inserted in \eqref{f2eq} will lead to an expression for $q\psi(q^2)\psi(q^6)$ in terms of basic hypergeometric series, and one particular choice (see Corollary \ref{c3r} below for details)  leads to the following identity.
{\allowdisplaybreaks
\begin{multline*}
2q\psi(q^2)\psi(q^6)=\sum_{n=1}^{\infty}\frac{1+q^{12n-1}}{1+1/q}
\frac{(-1/q,-1/q;q^6)_n(q^6;q^6)_{2n-1}(-q)^{5n}}{(q^6,q^6;q^6)_n(q^4;q^6)_{2n}}\\
-\sum_{n=1}^{\infty}\frac{1-q^{12n-1}}{1-1/q}
\frac{(1/q,1/q;q^6)_n(q^6;q^6)_{2n-1}q^{5n}}{(q^6,q^6;q^6)_n(q^4;q^6)_{2n}}\\
+\sum_{n=1}^{\infty}\frac{1-q^{12n-5}}{1-1/q^5}
\frac{(1/q^5,1/q^5;q^6)_n(q^6;q^6)_{2n-1}q^{n}}{(q^6,q^6;q^6)_n(1/q^4;q^6)_{2n}}\\
-\sum_{n=1}^{\infty}\frac{1+q^{12n-5}}{1+1/q^5}
\frac{(-1/q^5,-1/q^5;q^6)_n(q^6;q^6)_{2n-1}(-q)^{n}}{(q^6,q^6;q^6)_n(1/q^4;q^6)_{2n}}.
\end{multline*}
}

\section{Proof of the Main Identities}

The results in the present paper are derived as consequences of
letting $k \to 1$ in \eqref{wpbteq2b}.
Before coming to the proofs, we recall the $q$-Gauss sum
\begin{equation}\label{qgauss}
\sum_{n=0}^{\infty}\frac{(A,B;q)_n}{(C,q;q)_n}\left( \frac{C}{AB}\right)^n = \frac{(C/A,C/B;q)_{\infty}}{(C,C/AB;q)_{\infty}}.
\end{equation}

\begin{lemma}\label{p1}
Let $(\alpha_{n}(a,k),\,\beta_{n}(a,k))$ be a WP-Bailey pair and $(\alpha_n^*,\beta_n^*)$ the derived pair. For $|q|$, $|qa|$, $|q a^2|$ $<1$ and assuming suitable convergence conditions,
\begin{equation}\label{wp2eq1}
\sum_{n=1}^{\infty}a^{2n}q^{n}\beta_n^{*}(a)
-\sum_{n=1}^{\infty}\frac{(q;q)_{2n-1}}{(qa^2;q)_{2n}}a^{2n} q^{n}\alpha_n^{*}(a)
= f_{1}(a,q),
\end{equation}
where
{\allowdisplaybreaks\begin{align}\label{f1eq}
f_1(a,q)&=\sum_{n=1}^{\infty}\frac{(1/a;q)_na^{2n}q^n}{(aq;q)_n(1-q^n)}\\
&=-\sum_{n=1}^{\infty}\frac{(1-a q^{2n})(a,a;q)_n(q;q)_{2n-1} a^n q^n }{(1-a)(q,q;q)_n(qa^2;q)_{2n}} \notag\\
&=\sum_{n=1}^{\infty}\frac{a^2q^n}{1-a^2q^n} - \sum_{n=1}^{\infty}\frac{aq^n}{1-aq^n}.\notag
\end{align}
}
\end{lemma}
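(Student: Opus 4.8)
The plan is to derive \eqref{wp2eq1} by taking the limit $k\to 1$ in the transformation \eqref{wpbteq2b}. Recall that \eqref{wpbteq2b} reads
\[
\sum_{n=0}^{\infty}\left(\frac{qa^2}{k^2}\right)^n\beta_n(a,k)
=\frac{(qa/k,qa^2/k;q)_\infty}{(qa,qa^2/k^2;q)_\infty}
\sum_{n=0}^{\infty}\frac{(k;q)_{2n}}{(qa^2/k;q)_{2n}}\left(\frac{qa^2}{k^2}\right)^n\alpha_n(a,k).
\]
First I would split off the $n=0$ terms (both equal to $1$ since $\alpha_0=\beta_0=1$), so that the remaining sums run from $n=1$. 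On the left, as $k\to 1$ the factor $(qa^2/k^2)^n\to (qa^2)^n=a^{2n}q^n$ and $\beta_n(a,k)\to 0$ like $(1-k)\beta_n^*(a)$; but this vanishing is exactly compensated once I divide the whole identity by $(1-k)$, which is the natural normalization forced by the definition \eqref{ab*} of $\beta_n^*$. So the key move is to rewrite \eqref{wpbteq2b} after subtracting the common $n=0$ contribution and dividing both sides by $(1-k)$, then let $k\to 1$.

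The heart of the argument is the limiting behaviour of the right-hand side. After removing the $n=0$ term, the prefactor $\frac{(qa/k,qa^2/k;q)_\infty}{(qa,qa^2/k^2;q)_\infty}$ tends to $1$ as $k\to1$, so I would expand it to first order in $(1-k)$. Meanwhile each $\alpha$-term has a factor $(k;q)_{2n}$ in the numerator; since $(k;q)_{2n}=(1-k)(kq;q)_{2n-1}$ carries a single factor of $(1-k)$, the entire $\alpha$-sum is already $O(1-k)$, and after dividing by $(1-k)$ it contributes the finite sum $\sum_{n\ge1}\frac{(q;q)_{2n-1}}{(qa^2;q)_{2n}}a^{2n}q^n\alpha_n^*(a)$ (using $(kq;q)_{2n-1}\to(q;q)_{2n-1}$ and $(qa^2/k;q)_{2n}\to(qa^2;q)_{2n}$). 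The $\beta$-sum similarly yields $\sum_{n\ge1}a^{2n}q^n\beta_n^*(a)$. The delicate point is the prefactor: the product $\frac{(qa/k,qa^2/k;q)_\infty}{(qa,qa^2/k^2;q)_\infty}$, when expanded near $k=1$, produces a correction of order $(1-k)$ multiplying the leading ($n=0$) term $1$ of the $\alpha$-sum, and after division by $(1-k)$ this correction survives and must be identified. Taking logarithmic derivatives in $k$ at $k=1$ of each infinite product, one gets a Lambert-type series, and I expect this to reproduce precisely the closed form $f_1(a,q)$ in its Lambert-series guise $\sum_{n\ge1}\big(\frac{a^2q^n}{1-a^2q^n}-\frac{aq^n}{1-aq^n}\big)$, the last displayed expression in \eqref{f1eq}.

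The hardest step is the careful bookkeeping of the $(1-k)$-order terms, making sure I collect both the explicit contribution from the $\alpha$-sum and the hidden contribution from differentiating the infinite-product prefactor, and that nothing is double-counted when the $n=0$ terms are removed. Once the limit identity is established in the Lambert-series form, the remaining two representations of $f_1(a,q)$ in \eqref{f1eq} should follow from standard manipulations: the first, $\sum_{n\ge1}\frac{(1/a;q)_na^{2n}q^n}{(aq;q)_n(1-q^n)}$, via the $q$-Gauss sum \eqref{qgauss} or a partial-fraction expansion of the Lambert series, and the very-well-poised form via a routine rewriting of the $(1/a;q)_n$ series as a bilateral/very-well-poised sum. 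I would verify the equivalence of these three expressions either by matching their expansions in powers of $q$ or by a direct telescoping/partial-fraction argument, treating it as routine once the main limit is in hand.
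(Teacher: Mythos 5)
Your main limit argument is exactly the paper's: the paper rewrites \eqref{wpbteq2b} by removing the $n=0$ terms, using $(k;q)_{2n}=(1-k)(kq;q)_{2n-1}$, and dividing through by $1-k$, so that letting $k\to1$ produces the left side of \eqref{wp2eq1}, while the right side becomes $\lim_{k\to1}\bigl(G(k)-1\bigr)/(1-k)=-G'(1)$, where $G(k)$ is the infinite-product prefactor. Logarithmic differentiation of $G$ then gives the Lambert-series form of $f_1(a,q)$, and applying the $q$-Gauss sum \eqref{qgauss} (with $A=k$, $B=k/a$, $C=qa$) to $G(k)$ before passing to the limit gives the first displayed form. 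All of this coincides with what you propose, including the observation that the $\alpha$-sum already carries an explicit factor $1-k$ while the prefactor's first-order correction supplies the surviving right-hand side.

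The one place your plan falls short is the second, very-well-poised representation of $f_1$ in \eqref{f1eq}. That equality is not a ``routine rewriting'' of the $(1/a;q)_n$ series, and ``matching expansions in powers of $q$'' is not a proof. The paper obtains it with no extra computation by exploiting the fact that \eqref{wp2eq1}, once established, holds for \emph{every} derived WP-Bailey pair while $f_1$ is pair-independent: insert the unit WP-Bailey pair \eqref{up}, whose derived pair \eqref{up*} has $\beta_n^*\equiv 0$, and the left side of \eqref{wp2eq1} collapses to $-\sum_{n\geq 1}\frac{(q;q)_{2n-1}}{(qa^2;q)_{2n}}a^{2n}q^n\alpha_n^*(a)$, which is precisely the second expression (the paper likewise remarks that the first form can be recovered by inserting the trivial pair \eqref{tp}, via \eqref{triv*}). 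So the gap is easily repaired, but the repair uses your own main identity applied to a cleverly chosen pair rather than any standard series manipulation; as written, your argument for the second representation does not stand on its own.
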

For later use we note that
\begin{equation}\label{f1f2eq}
f_2(a,q)=f_1(a,q)-f_1(-a,q)=-\sum_{n=1}^{\infty}\frac{2a q^n}{1-a^2
q^{2n}},
\end{equation}
where $f_2(a,q)$ is as defined at \eqref{f2eq}.

\begin{proof}[Proof of Lemma \ref{p1}]
Rewrite \eqref{wpbteq2b} as
{\allowdisplaybreaks\begin{multline}\label{wpbteq2bb}
\sum_{n=1}^{\infty}  \left(\frac{q a^2}{k^2}\right)^n \frac{\beta_n(a,k)}{1-k}-\frac{\left(\frac{qa}{k},\frac{qa^2}{k};q\right)_{\infty}} {(q a,qa^2/k^2;q)_{\infty}}
\sum_{n=1}^{\infty} \frac{(kq;q)_{2n-1}} {(q a^2/k;q)_{2n}}\left (
\frac{q a^2}{k^2}\right)^n \alpha_n(a,k)\\
=\frac{1}{1-k}\left( \frac{(qa/k,qa^2/k;q)_{\infty}} {(q a,qa^2/k^2;q)_{\infty}}
-1\right).
\end{multline}}
The left side of \eqref{wp2eq1} now follows upon letting $k \to 1$. To get the first expression for $f_1(a,q)$, use \eqref{qgauss} to expand the infinite product on the right side as an infinite series (set $A=k$, $B=k/a$ and $C=qa$) and then once again let $k \to 1$.

The second expression for $f_1(a,q)$ follows upon substituting the unit WP-Bailey pair
\begin{align}\label{up}
\alpha_{n}(a,k)&=\frac{(q \sqrt{a}, -q
\sqrt{a},a,a/k;q)_n}{(\sqrt{a},-\sqrt{a},q,kq;q)_n}\left(\frac{k}{a}\right)^n,\\
\beta_n(a,k)&=\begin{cases} 1&n=0, \notag\\
0, &n>0,
\end{cases}
\end{align}
into \eqref{ab*} and then inserting the resulting derived pair
\begin{align}\label{up*}
\alpha_{n}^{*}(a)&=\frac{1-a q^{2n}}{1-a}\frac{(a,a;q)_n}{(q,q;q)_n}\left(\frac{1}{a}\right)^n,\\
\beta_n^{*}(a)&=0\notag
\end{align}
 on the left side of \eqref{wp2eq1}.

For the third representation of $f_1(a,q)$ define
\[
G(k):=\frac{(qa/k,qa^2/k;q)_{\infty}} {(q a,qa^2/k^2;q)_{\infty}}
\]
and then
\[
\lim_{k\to 1}\frac{1}{1-k}\left( \frac{(qa/k,qa^2/k;q)_{\infty}} {(q a,qa^2/k^2;q)_{\infty}}
-1\right)=\lim_{k \to 1}\frac{G(k)-G(1)}{1-k}=-G'(1),
\]
and logarithmic differentiation now easily gives the result.
\end{proof}

Remark: The first expression for $f_1(a,q)$ above also follows from inserting the ``trivial" WP-Bailey pair
\begin{align}\label{tp}
\alpha_{n}(a,k)&=\begin{cases} 1&n=0, \\
0, &n>0,
\end{cases}\\
\beta_n(a,k)&=\frac{(k,k/a;q)_n}{(q,aq;q)_n},\notag
\end{align}
into \eqref{ab*} and then inserting the resulting derived pair
\begin{align}\label{triv*}
\alpha_n^*(a)&=0,\\
\beta_n^*(a)&=\frac{(1/a;q)_n}{(aq;q)_n(1-q^n)}\notag
\end{align}
 on the left side of \eqref{wp2eq1}.

One way of viewing Lemma \ref{p1} is as supplying a large number of
representations of the difference of Lambert Series
\[
\sum_{n=1}^{\infty}\frac{a^2q^n}{1-a^2q^n} -
\sum_{n=1}^{\infty}\frac{aq^n}{1-aq^n}.
\]
Indeed, such a representation arises if any pair
$(\alpha_n^*,\beta_n^*)$ deriving from a WP-Bailey is inserted in
\eqref{wp2eq1}, assuming the limits exist and the resulting series
converge.
 We give two example below. The first arises from Singh's WP-Bailey pair
 \cite{S94}:
%{\allowdisplaybreaks
\begin{align}\label{singhpr}
\alpha_{n}(a,k)&=\frac{(q \sqrt{a}, -q
\sqrt{a},a,\rho_1,\rho_2,a^2q/k\rho_1\rho_2;q)_n}
{(\sqrt{a},-\sqrt{a},q,a q/\rho_1,a q/\rho_2,k\rho_1\rho_2/a;q)_n}\left(\frac{k}{a}\right)^n,\\
\beta_n(a,k)&=\frac{(k \rho_1/a, k\rho_2/a, k,
aq/\rho_1\rho_2;q)_n}{(a q/\rho_1, a q/\rho_2, k \rho_1
\rho_2/a,q;q)_n}. \notag
\end{align}
%}
This gives the derived pair
{\allowdisplaybreaks\begin{align}\label{singhpr*}
\alpha_{n}^*(a)&=\frac{(q \sqrt{a}, -q
\sqrt{a},a,\rho_1,\rho_2,a^2q/\rho_1\rho_2;q)_n}
{(\sqrt{a},-\sqrt{a},q,a q/\rho_1,a q/\rho_2,\rho_1\rho_2/a;q)_n}\left(\frac{1}{a}\right)^n,\\
\beta_n^*(a)&=\frac{( \rho_1/a, \rho_2/a,  aq/\rho_1\rho_2;q)_n}{(a
q/\rho_1, a q/\rho_2,  \rho_1 \rho_2/a;q)_n(1-q^n)}. \notag
\end{align}}
The parameters $\rho_1$ and $\rho_2$ are free, but for simplicity we
let $\rho_1, \rho_2 \to \infty$  to get the derived WP-Bailey pair
\begin{align}\label{singhpr**}
\alpha_n^*(a)&= \frac{1-a q^{2n}}{1-a}\frac{(a;q)_n}{(q;q)_n}(-1)^n q^{n(n-1)/2},\\
\beta_n^*(a)&= \frac{(-1)^n q^{n(n-1)/2}}{a^n(1-q^n)}.\notag
\end{align}
The WP-Bailey  pair
\begin{align}\label{pr3}
\alpha_{n}(a,k)&=\frac{\left(q \sqrt{a}, -q \sqrt{a},a,a
\sqrt{\frac{q}{k}},-a\sqrt{\frac{q}{k}},\frac{a}{\sqrt{k}},
-\frac{aq}{\sqrt{k}},\frac{k}{a};q\right)_n}
{\left(\sqrt{a},-\sqrt{a},q,\sqrt{kq},-\sqrt{kq},q\sqrt{k},-\sqrt{k},\frac{qa^2}{k};q\right)_n}
\left(\frac{k}{a}\right)^n,\\
\beta_n(a,k)&=\frac{\left(\sqrt{k},\frac{k^2}{a^2};q\right)_n}{(q\sqrt{k},q;q)_n},
\notag
\end{align}
provides the derived  pair
\begin{align}\label{pr3*}
\alpha_{n}^*(a)&=\frac{1-aq^{2n}}{1-a}\frac{\left(a,a,
-aq,1/a;q)_n(a^2 q;q^2\right)_n}
{\left(q,q,-1,qa^2;q)_n(q;q^2\right)_n}
\left(\frac{1}{a}\right)^n,\\
\beta_n^*(a)&=\frac{\left(1/a^2;q\right)_n}{2(q;q)_n(1-q^n)}. \notag
\end{align}

\begin{corollary}
If $|q|$, $|qa|$, $|q a^2|$ $<1$, then
{\allowdisplaybreaks\begin{align}\label{singhprpr3eqs}
\sum_{n=1}^{\infty}&\frac{a^2q^n}{1-a^2q^n} -
\sum_{n=1}^{\infty}\frac{aq^n}{1-aq^n}\\
&=\sum_{n=1}^{\infty}\frac{(-a)^n q^{n(n+1)/2}}{1-q^n}
-\sum_{n=1}^{\infty}\frac{1-a
q^{2n}}{1-a}\frac{(a;q)_n(q;q)_{2n-1}}{(q;q)_n(a^2q;q)_{2n}}(-a^2)^n
q^{n(n+1)/2}\notag\\
&=\frac{1}{2}\sum_{n=1}^{\infty}
\frac{\left(1/a^2;q\right)_na^{2n}q^n}{(q;q)_n(1-q^n)}\notag\\
&\phantom{asdasdasdasdasd}-\sum_{n=1}^{\infty}\frac{1-aq^{2n}}{1-a}\frac{\left(a,a,
-aq,1/a;q\right)_n(q^2;q^2)_{n-1}a^n q^n}
{\left(q,q,-1,qa^2;q\right)_n(a^2q^2;q^2)_{n}}.\notag
\end{align}}
\end{corollary}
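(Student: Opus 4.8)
The plan is to observe that the left-hand side of \eqref{singhprpr3eqs} is exactly $f_1(a,q)$ written in its third form from \eqref{f1eq}, so by Lemma \ref{p1} it suffices to evaluate the left side of \eqref{wp2eq1}, namely
\[
\sum_{n=1}^{\infty}a^{2n}q^{n}\beta_n^{*}(a) - \sum_{n=1}^{\infty}\frac{(q;q)_{2n-1}}{(qa^2;q)_{2n}}a^{2n}q^{n}\alpha_n^{*}(a),
\]
on each of the two derived pairs already exhibited. Because Lemma \ref{p1} asserts this combination equals $f_1(a,q)$ for every derived WP-Bailey pair, the two resulting expressions are automatically equal to the left-hand side, and hence to each other; the corollary merely records the two substitutions.

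First I would insert the pair \eqref{singhpr**}, for which $\beta_n^{*}(a)=(-1)^n q^{n(n-1)/2}/(a^n(1-q^n))$ and $\alpha_n^{*}(a)=\frac{1-aq^{2n}}{1-a}\frac{(a;q)_n}{(q;q)_n}(-1)^nq^{n(n-1)/2}$. The only manipulation needed is the exponent identity $n+\tfrac{n(n-1)}{2}=\tfrac{n(n+1)}{2}$, together with $(-1)^n a^{2n}=(-a^2)^n$ in the $\alpha$-term; these immediately produce the first expression on the right of \eqref{singhprpr3eqs}.

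Next I would insert the pair \eqref{pr3*}, where $\beta_n^{*}(a)=\frac{(1/a^2;q)_n}{2(q;q)_n(1-q^n)}$ contributes the factor $\tfrac12$ in the first sum directly. The $\alpha$-term produces the combined Pochhammer factor
\[
\frac{(q;q)_{2n-1}}{(qa^2;q)_{2n}}\cdot\frac{(a^2 q;q^2)_n}{(q;q^2)_n},
\]
and the crux of the argument is to show this equals $(q^2;q^2)_{n-1}/(a^2q^2;q^2)_n$. This follows by splitting each base-$q$ product into its odd- and even-index factors, $(q;q)_{2n-1}=(q;q^2)_n(q^2;q^2)_{n-1}$ and $(qa^2;q)_{2n}=(a^2q;q^2)_n(a^2q^2;q^2)_n$, after which the $(q;q^2)_n$ and $(a^2q;q^2)_n$ factors cancel. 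Combining this with the surviving power $a^n q^n$ coming from $a^{2n}q^n\cdot a^{-n}$ yields the last expression.

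The step I expect to be the only nonroutine part is this base-$q$ versus base-$q^2$ Pochhammer factorization; once it is in hand, everything else is bookkeeping in the two substitutions. A minor additional point is to note that the stated convergence hypotheses $|q|,|qa|,|qa^2|<1$ are precisely those under which Lemma \ref{p1} applies, so that the insertion of each derived pair needs no separate analytic justification.
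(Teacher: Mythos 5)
Your proposal is correct and follows exactly the paper's own (one-line) proof: insert the derived pairs \eqref{singhpr**} and \eqref{pr3*} into \eqref{wp2eq1} and identify the common left-hand side with the Lambert-series form of $f_1(a,q)$ from \eqref{f1eq}. The Pochhammer splittings $(q;q)_{2n-1}=(q;q^2)_n(q^2;q^2)_{n-1}$ and $(qa^2;q)_{2n}=(a^2q;q^2)_n(a^2q^2;q^2)_n$ that you single out are indeed the only nontrivial bookkeeping, and you have them right.
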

\begin{proof}
Insert the derived pairs, respectively, at \eqref{singhpr**} and
\eqref{pr3*} into \eqref{wp2eq1}.
\end{proof}

Before proving Theorem \ref{t1}, we recall the result from Lemma 4 in \cite{McL09a}
(this result was previously given by Andrews, Lewis and Liu in \cite{ALL01}, using a different labeling for the parameters): if
\begin{equation}\label{wpeq5}
f(a,k,z,q)
=
\sum_{n=1}^{\infty}\frac{k q^n}{1-kq^n}+ \sum_{n=1}^{\infty}\frac{ q^n a/z}{1-q^n a/z}-
\sum_{n=1}^{\infty}\frac{a q^n}{1-aq^n}- \sum_{n=1}^{\infty}\frac{ q^n k/z}{1-q^n k/z},
\end{equation}
then
\begin{multline}\label{wpeq7}
f(a,k,z,q)-f\left(\frac{1}{a},\frac{1}{k},\frac{1}{z},q\right)=
\frac{(a-k)(1-1/z)(1-ak/z)}{(1-a)(1-k)(1-a/z)(1-k/z)}\\
+\frac{z}{k}\frac{(z,q/z,k/a,qa/k,ak/z,qz/ak,q,q;q)_{\infty}}
{(z/k,qk/z,z/a,qa/z,a,q/a,k,q/k;q)_{\infty}}.
\end{multline}

\begin{proof}[Proof of Theorem \ref{t1}] Replace $k$ with $b$ and set $z=-1$ in \eqref{wpeq7}, to get (after some simple rearrangements) that
\begin{multline*}
f(a,b,-1,q)-f\left(\frac{1}{a},\frac{1}{b},-1,q\right)\\=
 \frac{2(a-b)(1+ab)}{(1-a^2)(1-b^2)}
-2a\frac{(b/a,qa/b,-ab,-q/ab;q)_{\infty}(q^2,q^2;q^2)_{\infty}}
{(a^2,q^2/a^2,b^2,q^2/b^2;q^2)_{\infty}}.
\end{multline*}
The result now follows, upon noting that \eqref{wpeq5} and \eqref{f1f2eq} imply that
\begin{align*}
f(a,b,-1,q)&= \sum_{n=1}^{\infty}\frac{2 b q^n}{1-b^2q^{2n}}
- \sum_{n=1}^{\infty}\frac{2a q^n}{1-a^2q^{2n}}=f_2(a,q)-f_2(b,q),\\
f(1/a,1/b,-1,q)&=f_2(1/a,q)-f_2(1/b,q).
\end{align*}
\end{proof}

As remarked earlier, any derived WP-Bailey pair
$(\alpha_n^*,\beta_n^*)$ may be used in \eqref{t1eq}, providing the
various series involved converge. We also remark that the identities
in the next two corollaries illustrate the somewhat interesting fact
that the series involving $a$ on the left side of \eqref{t1eq} are
completely separable from those involving $b$, while $a$ and $b$ are
inseparable in some of the products on the right side of
\eqref{t1eq}.

We note that the left side of \eqref{t1eq} contains sixteen
different infinite series, so for space saving reasons we give two
example that uses  relatively simple derived pairs. Upon inserting
the  pair at \eqref{singhpr**} in \eqref{t1eq} and performing some
simple collecting and rearranging of terms, the following identity
results.
\begin{corollary} Let $a$ and $b$ be non-zero complex numbers such that $a^2q^n$, $b^2q^n \not = 1$, for $n \in \mathbb{Z}$.
Then {\allowdisplaybreaks\begin{multline}\label{t1c1eq}
\sum_{n=0}^{\infty}\frac{q^{2n^2+3n+1}}{1-q^{2n+1}}
\bigg[b^{2n+1}-a^{2n+1}+\frac{1}{a^{2n+1}}-\frac{1}{b^{2n+1}}\bigg ]
\\
-\frac{1}{2}\sum_{n=1}^{\infty}\frac{(-1)^n
q^{n(n+1)/2}(q;q)_{2n-1}}{(q;q)_n}  \bigg[ \frac{(1-a
q^{2n})(a;q)_n}{(1-a)(qa^2;q)_{2n}}a^{2n}\\ -\frac{(1+a
q^{2n})(-a;q)_n}{(1+a)(qa^2;q)_{2n}}a^{2n} \\-\frac{(1-b
q^{2n})(b;q)_n}{(1-b)(qb^2;q)_{2n}}b^{2n}
+\frac{(1+b q^{2n})(-b;q)_n}{(1+b)(qb^2;q)_{2n}}b^{2n}\\
-\frac{(1- q^{2n}/a)(1/a;q)_n}{(1-1/a)(q/a^2;q)_{2n}}a^{-2n}
+\frac{(1+ q^{2n}/a)(-1/a;q)_n}{(1+1/a)(q/a^2;q)_{2n}}a^{-2n}\\
+\frac{(1- q^{2n}/b)(1/b;q)_n}{(1-1/b)(q/b^2;q)_{2n}}b^{-2n}
-\frac{(1+ q^{2n}/b)(-1/b;q)_n}{(1+1/b)(q/b^2;q)_{2n}}b^{-2n}
\bigg]\\
= \frac{(a-b)(1+ab)}{(1-a^2)(1-b^2)}
-a\frac{(b/a,qa/b,-ab,-q/ab;q)_{\infty}(q^2,q^2;q^2)_{\infty}}
{(a^2,q^2/a^2,b^2,q^2/b^2;q^2)_{\infty}}.
\end{multline}}
\end{corollary}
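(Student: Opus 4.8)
The plan is to substitute the derived pair \eqref{singhpr**} directly into the definition \eqref{f2eq} of $f_2$, to evaluate $f_2$ at each of the four arguments $a$, $b$, $1/a$, $1/b$, and to insert the results into the identity \eqref{t1eq} of Theorem \ref{t1}. The stated identity should then emerge after collecting like terms and dividing through by a common factor of $2$.

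First I would handle the two $\beta^*$-sums in \eqref{f2eq}. Writing $\beta_n^*(a)=(-1)^nq^{n(n-1)/2}/(a^n(1-q^n))$ and using $n+n(n-1)/2=n(n+1)/2$, one finds $a^{2n}q^n\beta_n^*(a)=(-a)^nq^{n(n+1)/2}/(1-q^n)$, whereas $a^{2n}q^n\beta_n^*(-a)=a^nq^{n(n+1)/2}/(1-q^n)$. Their difference thus has numerator $\bigl((-a)^n-a^n\bigr)q^{n(n+1)/2}$, which vanishes for even $n$ and equals $-2a^nq^{n(n+1)/2}$ for odd $n$. Reindexing the surviving odd terms by $n=2m+1$, so that the exponent becomes $(2m+1)(m+1)=2m^2+3m+1$, collapses the two $\beta^*$-sums of $f_2(a,q)$ into
\[
-2\sum_{m\ge0}\frac{a^{2m+1}q^{2m^2+3m+1}}{1-q^{2m+1}}.
\]
Repeating this for the arguments $b$, $1/a$, $1/b$ and combining with the signs $+,-,+,-$ prescribed by the left side of \eqref{t1eq} produces exactly twice the first sum in \eqref{t1c1eq}.

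Next I would treat the two $\alpha^*$-sums. Since $(qa^2;q)_{2n}$ depends only on $a^2$, it is unchanged under $a\mapsto-a$, so writing $\alpha_n^*(\pm a)=\frac{1\mp aq^{2n}}{1\mp a}\frac{(\pm a;q)_n}{(q;q)_n}(-1)^nq^{n(n-1)/2}$ and again using $q^nq^{n(n-1)/2}=q^{n(n+1)/2}$, the third and fourth sums of $f_2(a,q)$ combine into
\[
\sum_{n\ge1}\frac{(q;q)_{2n-1}(-1)^nq^{n(n+1)/2}a^{2n}}{(qa^2;q)_{2n}(q;q)_n}\left[\frac{(1+aq^{2n})(-a;q)_n}{1+a}-\frac{(1-aq^{2n})(a;q)_n}{1-a}\right].
\]
This is precisely twice the pair of $a$-bracket terms appearing (under the overall factor $-\tfrac12$) in \eqref{t1c1eq}; the analogous computations for $b$, $1/a$, $1/b$ supply the remaining six bracket terms, with signs matching the pattern of \eqref{t1eq}.

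Finally, since both the left and right sides of \eqref{t1eq} now carry a uniform factor of $2$ (the right side of \eqref{t1eq} is visibly twice that of \eqref{t1c1eq}), dividing the whole identity by $2$ yields \eqref{t1c1eq}. The hypotheses $a^2q^n,b^2q^n\neq1$ for $n\in\mathbb{Z}$ guarantee that none of the denominators $1-q^{2n+1}$, $1\mp a$, $(qa^2;q)_{2n}$ and their companions vanish. The computation involves no conceptual difficulty; the main obstacle is purely organizational, namely keeping track of the sixteen series with the correct signs, confirming the parity cancellation in the $\beta^*$-sums, and checking that the factor of $2$ emerges consistently across all four evaluations of $f_2$ so that the final division is legitimate.
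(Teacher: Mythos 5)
Your proposal is correct and follows essentially the same route as the paper, whose proof consists precisely of inserting the derived pair \eqref{singhpr**} into \eqref{t1eq} and collecting terms; your parity argument for the $\beta^*$-sums, the reindexing $n=2m+1$, and the observation that both sides carry a uniform factor of $2$ are exactly the "simple collecting and rearranging" the paper alludes to. One trivial slip: the signs prescribed by the left side of \eqref{t1eq} for the arguments $a$, $b$, $1/a$, $1/b$ are $+,-,-,+$ rather than $+,-,+,-$ as you wrote, but your stated conclusion (twice the first sum of \eqref{t1c1eq}) is the one that follows from the correct signs, so this is a typo rather than an error in the computation.
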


\begin{corollary} Let $a$ and $b$ be non-zero complex numbers such that
$aq^n$, $bq^n \not = \pm 1$, for $n \in \mathbb{Z}$, and $|q|<\min
\{|a^2|, 1/|a^2|,|b^2|, 1/|b^2|\}$. Then
{\allowdisplaybreaks\begin{multline}\label{t1c2eq}
\sum_{n=1}^{\infty} \bigg[ \left( \frac{(1/a;q)_n}{(qa;q)_{n}}
-\frac{(-1/a;q)_n}{(-qa;q)_{n}}\right)\frac{a^{2n}q^n}{1-q^n}\\-
\left( \frac{(a;q)_n}{(q/a;q)_{n}}
-\frac{(-a;q)_n}{(-q/a;q)_{n}}\right)\frac{q^n}{a^{2n}(1-q^n)}\\
-\left( \frac{(1/b;q)_n}{(qb;q)_{n}}
-\frac{(-1/b;q)_n}{(-qb;q)_{n}}\right)\frac{b^{2n}q^n}{1-q^n}\\
\phantom{asdasdasdasd}+ \left( \frac{(b;q)_n}{(q/b;q)_{n}}
-\frac{(-b;q)_n}{(-q/b;q)_{n}}\right)\frac{q^n}{b^{2n}(1-q^n)}
\bigg]\\
= \frac{2(a-b)(1+ab)}{(1-a^2)(1-b^2)}
-2a\frac{(b/a,qa/b,-ab,-q/ab;q)_{\infty}(q^2,q^2;q^2)_{\infty}}
{(a^2,q^2/a^2,b^2,q^2/b^2;q^2)_{\infty}}.
\end{multline}}
\end{corollary}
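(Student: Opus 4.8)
The plan is to recognize the left-hand side of \eqref{t1c2eq} as the specialization of the left-hand side of Theorem \ref{t1} obtained by inserting the ``trivial'' derived WP-Bailey pair \eqref{triv*}, for which $\alpha_n^*(a)=0$ and $\beta_n^*(a)=(1/a;q)_n/\big((aq;q)_n(1-q^n)\big)$. Since $\alpha_n^*\equiv 0$, the two $\alpha$-sums in the definition \eqref{f2eq} of $f_2(a,q)$ vanish, and $f_2(a,q)$ collapses to
\[
f_2(a,q)=\sum_{n=1}^\infty a^{2n}q^n\big(\beta_n^*(a)-\beta_n^*(-a)\big)
=\sum_{n=1}^\infty\left(\frac{(1/a;q)_n}{(qa;q)_n}-\frac{(-1/a;q)_n}{(-qa;q)_n}\right)\frac{a^{2n}q^n}{1-q^n},
\]
where $\beta_n^*(-a)$ is obtained from $\beta_n^*$ by the replacement $a\mapsto -a$.

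Next I would compute the four quantities appearing on the left side of \eqref{t1eq} by making the substitutions $a\mapsto a,\,1/a,\,b,\,1/b$ in the displayed formula for $f_2$. The replacement $a\mapsto 1/a$ turns $(1/a;q)_n/(qa;q)_n$ into $(a;q)_n/(q/a;q)_n$ and the prefactor $a^{2n}$ into $a^{-2n}$, producing exactly the second bracketed group in \eqref{t1c2eq}; the two substitutions involving $b$ behave identically. Collecting the four evaluations, the left-hand side $f_2(a,q)-f_2(b,q)-f_2(1/a,q)+f_2(1/b,q)$ of \eqref{t1eq} becomes precisely the single sum on the left of \eqref{t1c2eq}, once the four series are merged termwise. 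The right-hand side is then copied verbatim from \eqref{t1eq}, so no further computation is needed there.

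The only genuine point to check is convergence, which is exactly what the hypothesis $|q|<\min\{|a^2|,1/|a^2|,|b^2|,1/|b^2|\}$ guarantees. Since $(1/a;q)_n\to(1/a;q)_\infty$ and $(aq;q)_n\to(aq;q)_\infty$ as $n\to\infty$, the general term of $f_2(a,q)$ is asymptotically a constant times $a^{2n}q^n$, so that series converges when $|a^2q|<1$, i.e.\ $|q|<1/|a^2|$, while the $a\mapsto 1/a$ series converges when $|q|<|a^2|$, and similarly for the $b$-series. Thus the stated condition ensures all four of $f_2(a,q),f_2(1/a,q),f_2(b,q),f_2(1/b,q)$ converge, so that Theorem \ref{t1} applies and the termwise merging of the four sums into one is valid. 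I do not anticipate any real obstacle: the entire content of the corollary is carried by Theorem \ref{t1}, and the work reduces to bookkeeping of the substitutions $a\mapsto\pm a,\pm 1/a,\pm b,\pm 1/b$ together with this elementary convergence check.
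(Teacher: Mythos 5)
Your proposal is correct and follows exactly the paper's own argument: the paper proves this corollary by the single step of inserting the derived pair \eqref{triv*} into \eqref{t1eq}, which is precisely your specialization $\alpha_n^*\equiv 0$, $\beta_n^*(a)=(1/a;q)_n/\bigl((aq;q)_n(1-q^n)\bigr)$ followed by the substitutions $a\mapsto a,1/a,b,1/b$. Your additional convergence check matches the role of the hypothesis $|q|<\min\{|a^2|,1/|a^2|,|b^2|,1/|b^2|\}$ and is a welcome (if routine) elaboration of what the paper leaves implicit.
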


\begin{proof}
Insert the derived pair at \eqref{triv*} into \eqref{t1eq}.
\end{proof}

\section{$q$-series/products that are representable in terms of certain Lambert Series}

Many $q$-series and $q$-products have been represented by Ramanujan
and others in terms of Lambert series of the types encountered
earlier. The various expressions for $f_1(a,q)$ and $f_2(a,q)$
stated previously now permit these $q$-series and $q$-products to
expressed in several ways as basic hypergeometric series, one way
for each derived WP-Bailey pair (or arbitrarily many ways, if a
derived pair contains one or more free parameters). We give several
examples to illustrate the different ways in which this may be
accomplished.

Let
\[
a(q):=\sum_{m,n=-\infty}^{\infty}q^{m^2+mn+n^2}.
\]
Here we are using the notation  for this series employed in
\cite{BB91}, where it was shown that
\[
a^3(q)=b^3(q)+c^3(q),
\]where $b(q)=\sum_{m,n=-\infty}^{\infty}\omega^{m-n}q^{m^2+mn+n^2}$,
$\omega=exp(2\pi i/3)$, and
$c(q)=$\\ $\sum_{m,n=-\infty}^{\infty}q^{(m+1/3)^2+(m+1/3)(n+1/3)+(n+1/3)^2}$.
The series $a(q)$ was also studied by Ramanujan, who showed
(\textbf{Entry 18.2.8} of Ramanujan's Lost Notebook - see \cite[page
402]{AB05}) that
\begin{equation}\label{corlameq2}
a(q)=1+6\sum_{n=1}^{\infty}\frac{q^{-2}q^{3n}}{1-q^{-2}q^{3n}}
-6\sum_{n=1}^{\infty}\frac{q^{-1}q^{3n}}{1-q^{-1}q^{3n}}.
\end{equation}
From this identity we may deduce several representations of $a(q)$
in terms of basic hypergeometric series.
\begin{corollary}\label{corlam1} If $\rho_1, \rho_2 \not = 0$ and
$0<|q|<1$, then {\allowdisplaybreaks
\begin{align}\label{corlameq1}
a(q)&=1+6 \sum_{n=1}^{\infty}\frac{(q;q^3)_n q^n}
{(q^2;q^3)_n(1-q^{3n})},\\
&=1-6\sum_{n=1}^{\infty}
\frac{(1-q^{6n-1})(1/q,1/q;q^3)_n(q^3;q^3)_{2n-1}q^{2n}}
{(1-1/q)(q^3,q^3;q^3)_n(q;q^3)_{2n}},\notag\\
&=1+6\sum_{n=1}^{\infty}
\frac{(\rho_1q,\rho_2q,q^2/\rho_1\rho_2;q^3)_n q^n}
 {(q^2/\rho_1,q^2/\rho_2,\rho_1\rho_2q;q^3)_n(1-q^{3n})}\notag\\
 &\phantom{asasd}-6\sum_{n=1}^{\infty}
\frac{(1-q^{6n-1})(1/q,\rho_1,\rho_2,
q/\rho_1\rho_2;q^3)_n(q^3;q^3)_{2n-1}q^{2n}}
{(1-1/q)(q^2/\rho_1,q^2/\rho_2,\rho_1\rho_2
q,q^3;q^3)_n(q;q^3)_{2n}},\notag\\
&=1+6\sum_{n=1}^{\infty}\frac{(-1)^nq^{(3n^2+n)/2}}{1-q^{3n}}\notag\\
&\phantom{aasdas} -6 \sum_{n=1}^{\infty}
\frac{(1-q^{6n-1})(1/q;q^3)_{n}(q^3;q^3)_{2n-1}(-1)^nq^{(3n^2-n)/2}}
{(1-1/q)(q^3;q^3)_n(q;q^3)_{2n}}.\notag
\end{align}
}
\end{corollary}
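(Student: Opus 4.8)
The plan is to reduce all four identities to the single observation that Ramanujan's Lambert-series form of $a(q)$ in \eqref{corlameq2} is, up to the additive $1$ and the factor $6$, exactly one specialization of the Lambert-series expression for $f_1(a,q)$ given in the third line of \eqref{f1eq}. First I would recall
\[
f_1(a,q)=\sum_{n=1}^{\infty}\frac{a^2q^n}{1-a^2q^n}-\sum_{n=1}^{\infty}\frac{aq^n}{1-aq^n},
\]
and then replace the base $q$ by $q^3$ and set $a=1/q$. This makes $a^2q^{3n}=q^{-2}q^{3n}$ and $a\,q^{3n}=q^{-1}q^{3n}$, so the two sums match term for term the two Lambert series in \eqref{corlameq2}. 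The conclusion is the central identity
\[
a(q)=1+6\,f_1(q^{-1},q^3),
\]
which turns the corollary into the problem of writing $f_1(q^{-1},q^3)$ in several different ways.

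Each of the four claimed formulae then comes from inserting one of the already-established expressions for $f_1$, under the same specialization $a=1/q$, base $q^3$. The first formula is the first line of \eqref{f1eq} (equivalently the trivial derived pair \eqref{triv*}); the second is the second line of \eqref{f1eq} (equivalently the unit derived pair \eqref{up*}); the third follows by substituting Singh's derived pair \eqref{singhpr*} into the left side of \eqref{wp2eq1}; and the fourth follows from the $\rho_1,\rho_2\to\infty$ limit of Singh's pair, namely \eqref{singhpr**}, again in \eqref{wp2eq1}. In every case I multiply the resulting expression for $f_1(q^{-1},q^3)$ by $6$ and add $1$.

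The only genuinely new labour is the specialization bookkeeping, which I would carry out uniformly. With $a=1/q$ and base $q^3$ one has $a^{2n}q^{n}=q^{-2n}q^{3n}=q^{n}$ in every $\beta^{*}$ term, while in the $\alpha^{*}$ terms $(qa^2;q)_{2n}\to(q;q^3)_{2n}$ and $(q;q)_{2n-1}\to(q^3;q^3)_{2n-1}$. The very-well-poised factor collapses as
\[
\frac{(q\sqrt a,-q\sqrt a;q)_n}{(\sqrt a,-\sqrt a;q)_n}=\frac{1-aq^{2n}}{1-a}=\frac{1-q^{6n-1}}{1-1/q},
\]
which produces the recurring factor $(1-q^{6n-1})/(1-1/q)$. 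The Singh products $(\rho_1/a,\rho_2/a,aq/\rho_1\rho_2;q)_n$ become $(\rho_1q,\rho_2q,q^2/\rho_1\rho_2;q^3)_n$ and similarly for the denominator, and in the fourth formula the exponents combine via $3n(n-1)/2+n=(3n^2-n)/2$, giving the stated $q^{(3n^2+n)/2}$ and $q^{(3n^2-n)/2}$.

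The main obstacle, and really the only conceptual step, is spotting the correct specialization $a=1/q$ with base $q^3$ that aligns the two Lambert series of \eqref{corlameq2} with those of $f_1$; once this is seen, each representation is a routine substitution and simplification. There is no convergence difficulty for $0<|q|<1$, since the hypotheses $|q|,|qa|,|qa^2|<1$ of Lemma \ref{p1} become $|q^3|,|q^2|,|q|<1$ under this specialization, all of which hold.
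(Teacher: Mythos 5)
Your proposal is correct and matches the paper's own proof: the paper likewise deduces $a(q)=1+6f_1(1/q,q^3)$ from \eqref{corlameq2}, obtains the first two equalities from the other two representations of $f_1$ in \eqref{f1eq}, and obtains the last two by substituting the derived pairs \eqref{singhpr*} and \eqref{singhpr**} into \eqref{wp2eq1} with $q\mapsto q^3$, $a\mapsto 1/q$. Your specialization bookkeeping (e.g.\ $a^{2n}q^{3n}=q^n$, $(qa^2;q)_{2n}\mapsto(q;q^3)_{2n}$, and the exponent combination $(3n^2-n)/2$) is accurate and simply makes explicit what the paper leaves to the reader.
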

\begin{proof}
From \eqref{corlameq2} it can be seen that $a(q)=1+6f_1(1/q,q^3)$,
where $f_1(a,q)$ is as defined at \eqref{f1eq}. The first two
equalities follow from the other two representations of
$1+6f_1(1/q,q^3)$ that derive from the right side of \eqref{f1eq}.
The last two equalities follow from substituting the derived pairs
at \eqref{singhpr*} and \eqref{singhpr**} into $1+6\times$(the left
of \eqref{wp2eq1}) (with $q$ replaced with $q^3$ and $a$ replaced
with $1/q$).
\end{proof}

Recall that
\[
\psi
(q):=\sum_{n=0}^{\infty}q^{n(n+1)/2}=\frac{(q^2;q^2)_{\infty}}{(q;q^2)_{\infty}},
\]
is one of the theta functions studied extensively by Ramanujan.
\begin{corollary}\label{c2}
Let $|\rho_1|>1$ and $i=\sqrt{-1}$. Then
{\allowdisplaybreaks\begin{align}\label{psiq42cor}
&\psi^2(q^4)=\frac{i}{2q}\bigg(\sum_{n=1}^{\infty}
\frac{(1+iq^{4n-1})(-i/q,-i/q;q^2)_n(q^2;q^2)_{2n-1}(-iq)^n}
{(1+i/q)(q^2,q^2;q^2)_n(-1;q^2)_{2n}}\\
&\phantom{aasdaasdasdad}-
\sum_{n=1}^{\infty}
\frac{(1-iq^{4n-1})(i/q,i/q;q^2)_n(q^2;q^2)_{2n-1}(iq)^n}
{(1-i/q)(q^2,q^2;q^2)_n(-1;q^2)_{2n}}\bigg),\notag\\
&=\frac{1}{2iq}\bigg(\sum_{n=1}^{\infty}\frac{(i\rho_1 q;q^2)_n\left(-1\right)^n}{(-i
q/\rho_1;q^2)_n(1-q^{2n})\,\rho_1^n}
-\sum_{n=1}^{\infty}\frac{(-i\rho_1 q;q^2)_n\left(-1\right)^n}{(i
q/\rho_1;q^2)_n(1-q^{2n})\,\rho_1^n}\notag\\
&\phantom{aasdaasdasdad}+ \sum_{n=1}^{\infty}
\frac{(1-iq^{4n-1})(i/q,\rho_1;q^2)_n(q^2;q^2)_{2n-1}(-1)^n}
{(1-i/q)(iq/\rho_1,q^2;q^2)_n(-1;q^2)_{2n}\,\rho_1^n}\notag\\
&\phantom{aasdaasdasdad}-
\sum_{n=1}^{\infty}
\frac{(1+iq^{4n-1})(-i/q,\rho_1;q^2)_n(q^2;q^2)_{2n-1}(-1)^n}
{(1+i/q)(-iq/\rho_1,q^2;q^2)_n(-1;q^2)_{2n}\,\rho_1^n}
\bigg),\notag\\
&=\sum_{n=0}^{\infty}\frac{(-1)^n q^{4n^2+4n}}{1-q^{4n+2}}+\frac{1}{2iq}\bigg(
\sum_{n=1}^{\infty}
\frac{(1-iq^{4n-1})(i/q;q^2)_n(q^2;q^2)_{2n-1}q^{n^2-n}}
{(1-i/q)(q^2;q^2)_n(-1;q^2)_{2n}}\notag\\
&\phantom{aasdaasdasdad} -\sum_{n=1}^{\infty}
\frac{(1+iq^{4n-1})(-i/q;q^2)_n(q^2;q^2)_{2n-1}q^{n^2-n}}
{(1+i/q)(q^2;q^2)_n(-1;q^2)_{2n}}\bigg).\notag
\end{align}}
\end{corollary}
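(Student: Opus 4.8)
The plan is to recognise that the entire corollary is governed by a single Lambert-series identity, namely
\[
\psi^2(q^4) = \frac{i}{2q}\,f_2(i/q,q^2),
\]
and that the three displayed formulae are just three expansions of the right-hand side obtained from Lemma~\ref{p1} and the derived pairs already introduced. To set this up I would first use the Lambert-series form of $f_2$ recorded at \eqref{f1f2eq}: with base $q^2$ and $a=i/q$ one has $a^2=-1/q^2$, so
\[
f_2(i/q,q^2)=-\sum_{n\geq1}\frac{2iq^{2n-1}}{1+q^{4n-2}},
\qquad
\frac{i}{2q}f_2(i/q,q^2)=\sum_{n\geq1}\frac{q^{2n-2}}{1+q^{4n-2}}.
\]
The task then reduces to matching this against the classical Lambert evaluation $\psi^2(q^4)=\sum_{n\geq1}q^{2n-2}/(1+q^{4n-2})$ (equivalently $\psi^2(Q^2)=\sum_{m\geq0}Q^m/(1+Q^{2m+1})$), exactly as the preceding corollaries start from a known Lambert representation of $a(q)$. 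This identification is the real content; everything afterwards is substitution.

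Given this, I would produce each representation by specialising $q\mapsto q^2$ and $a=i/q$ in the relevant formula of Lemma~\ref{p1}, and forming $f_2(i/q,q^2)=f_1(i/q,q^2)-f_1(-i/q,q^2)$ by combining the $a$ and $-a$ versions. For the first representation I would feed the second (``unit pair'') expression for $f_1$ in \eqref{f1eq} into this difference; under the substitution $(qa^2;q)_{2n}$ collapses to $(-1;q^2)_{2n}$, the factor $a^nq^n$ becomes $(\pm iq)^n$, and $1-aq^{2n}$ becomes $1\mp iq^{4n-1}$, which is precisely the first display once the prefactor $i/(2q)$ is distributed. For the third representation I would insert the fully specialised Singh pair \eqref{singhpr**}: here the $\beta^*$-terms of $f_1(i/q)$ and $f_1(-i/q)$ combine through $(-i)^n-i^n$ (which vanishes for even $n$ and equals $-2i(-1)^m$ when $n=2m+1$) to produce the single sum $\sum_{n\geq0}(-1)^nq^{4n^2+4n}/(1-q^{4n+2})$, while the $\alpha^*$-terms give the remaining two sums (using $1/(2iq)=-i/(2q)$). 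For the middle representation I would use Singh's derived pair \eqref{singhpr*} with $\rho_2\to\infty$, which leaves $\rho_1$ free and explains both the one-parameter family and the hypothesis $|\rho_1|>1$.

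The hard part will be the first step: correctly guessing the specialisation ($a=i/q$, base $q^2$) together with the prefactor $i/(2q)$, and pinning it to a genuine, citable Lambert series for $\psi^2(q^4)$. After that, the only things needing care are bookkeeping (powers of $i$, the collapse $(qa^2;q)_{2n}\to(-1;q^2)_{2n}$, and the $(-i)^n-i^n$ recombination) and a convergence check. The latter is genuinely delicate, since with $a=i/q$ and base $q^2$ the quantity ``$qa^2$'' of Lemma~\ref{p1} has modulus $|q^2a^2|=1$, sitting exactly on the boundary of the stated hypotheses. I would therefore verify directly that each series still converges for $|q|<1$ (the relevant products such as $(i/q,i/q;q^2)_n$ tend to finite limits, while the surviving factors $(\pm iq)^n$ or $q^{n^2-n}$ decay geometrically), and note that Lemma~\ref{p1} and the derived-pair formulae are analytic identities valid for complex $a$, so the complex choice $a=i/q$ is legitimate.
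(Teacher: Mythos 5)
Your proposal is correct and follows essentially the same route as the paper: the paper likewise starts from Ramanujan's Lambert series $\psi^2(q^2)=\sum_{n\geq 0}q^n/(1+q^{2n+1})$, rewrites $\psi^2(q^4)$ as $\frac{1}{2iq}f_2\left(\frac{1}{iq},q^2\right)$ (your choice $a=i/q$ with prefactor $\frac{i}{2q}$ is the same thing by oddness of $f_2$ in $a$), and then derives the three displays from, respectively, the second (unit-pair) representation of $f_1$ in \eqref{f1eq}, the pair \eqref{singhpr*} with $\rho_2\to\infty$, and the further limit $\rho_1\to\infty$ (equivalent to your direct use of \eqref{singhpr**}). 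Your explicit check of convergence at the boundary case $|qa^2|=1$ is a point of care that the paper passes over under its blanket ``suitable convergence conditions.''
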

{\allowdisplaybreaks
\begin{proof}
By Example (iv) in Section 17 of Chapter 17 of Ramanujan's second notebook (see \cite[page 139]{B91}),
\[
\psi^2(q^2)=\sum_{n=0}^{\infty}\frac{q^n}{1+q^{2n+1}},
\]
so that {\allowdisplaybreaks\begin{align}\label{psieq2}
\psi^2(q^4)&=\sum_{n=1}^{\infty}\frac{q^{2n-2}}{1+q^{4n-2}}
=\frac{1}{2iq}\sum_{n=1}^{\infty}\frac{-2\left( \frac{1}{iq}\right)q^{2n}}{1-\left(\frac{1}{iq} \right)^2q^{4n}}\\
&=\frac{1}{2iq}f_2\left(\frac{1}{iq},q^2 \right)
=\frac{1}{2iq}\left(f_1\left(\frac{1}{iq},q^2 \right)-  f_1\left(\frac{-1}{iq},q^2 \right)   \right).\notag
\end{align}}
The first equality now follows from \eqref{f1eq}, using the second representations for $f_1(1/iq,q^2)$ and $f_1(-1/iq,q^2)$.

The second equality is a consequence of letting $\rho_2 \to \infty$ in the derived pair at \eqref{singhpr*} and substituting the resulting derived pair into the expression for $f_2(1/iq,q^2)/(2iq)$ that follows from \eqref{f2eq}.

The third equality is a consequence of letting $\rho_1 \to \infty$ in the second equality.
\end{proof}}

For a third example, we recall another
identity of Ramanujan (see \textbf{Entry 3} (i), Chapter 19, page
223 of \cite{B91}):
\begin{equation}\label{rqp2p6}
q\psi(q^2)\psi(q^6)=\sum_{n=1}^{\infty}\frac{q^{6n-5}}{1-q^{12n-10}}
-\sum_{n=1}^{\infty}\frac{q^{6n-1}}{1-q^{12n-2}},
\end{equation}

From \eqref{f1f2eq},
\begin{align*}
q\psi(q^2)\psi(q^6)&=\frac{1}{2}\left(f_2(q^{-1},q^6)-f_2(q^{-5},q^6)\right)\\
&=\frac{1}{2}\left(f_1(q^{-1},q^6)-f_1(-q^{-1},q^6)-f_1(q^{-5},q^6)+f_1(-q^{-5},q^6)\right)
\end{align*}
Thus any derived pair $(\alpha_n^*(a,q), \beta_n^*(a,q))$ inserted in Lemma \ref{p1}, with $q$ replaced with $q^6$ and $a$ taking the values $q^{-1}, -q^{-1}, q^{-5}, -q^{-5}$ will give an expression for $q\psi(q^2)\psi(q^6)$ containing 8 series. However, for simplicity, we use the pair at \eqref{up*} (so $\beta_n^*(a)=0$, reducing the 8 series to 4) to get the following identity.

\begin{corollary}\label{c3r}
{\allowdisplaybreaks\begin{multline}\label{c3eq}
2q\psi(q^2)\psi(q^6)=\sum_{n=1}^{\infty}\frac{1+q^{12n-1}}{1+1/q}
\frac{(-1/q,-1/q;q^6)_n(q^6;q^6)_{2n-1}(-q)^{5n}}{(q^6,q^6;q^6)_n(q^4;q^6)_{2n}}\\
-\sum_{n=1}^{\infty}\frac{1-q^{12n-1}}{1-1/q}
\frac{(1/q,1/q;q^6)_n(q^6;q^6)_{2n-1}q^{5n}}{(q^6,q^6;q^6)_n(q^4;q^6)_{2n}}\\
+\sum_{n=1}^{\infty}\frac{1-q^{12n-5}}{1-1/q^5}
\frac{(1/q^5,1/q^5;q^6)_n(q^6;q^6)_{2n-1}q^{n}}{(q^6,q^6;q^6)_n(1/q^4;q^6)_{2n}}\\
-\sum_{n=1}^{\infty}\frac{1+q^{12n-5}}{1+1/q^5}
\frac{(-1/q^5,-1/q^5;q^6)_n(q^6;q^6)_{2n-1}(-q)^{n}}{(q^6,q^6;q^6)_n(1/q^4;q^6)_{2n}}.
\end{multline}}
\end{corollary}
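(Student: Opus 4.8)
The plan is to build entirely on the decomposition
\[
q\psi(q^2)\psi(q^6)=\frac{1}{2}\left(f_1(q^{-1},q^6)-f_1(-q^{-1},q^6)-f_1(q^{-5},q^6)+f_1(-q^{-5},q^6)\right),
\]
which is already established immediately above the corollary as a consequence of Ramanujan's identity \eqref{rqp2p6} together with the relation \eqref{f1f2eq}. Because the right side consists solely of evaluations of $f_1$, it suffices to fix one basic hypergeometric representation of $f_1(a,q)$ and then specialize it at the four arguments $(a,q)=(q^{-1},q^6),(-q^{-1},q^6),(q^{-5},q^6),(-q^{-5},q^6)$.

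To obtain the representation I would insert the derived WP-Bailey pair \eqref{up*}, for which $\beta_n^*(a)=0$, into the left side of \eqref{wp2eq1} in Lemma \ref{p1}. The vanishing of $\beta_n^*$ collapses that left side to a single sum, reproducing exactly the second expression for $f_1(a,q)$ recorded in \eqref{f1eq},
\[
f_1(a,q)=-\sum_{n=1}^{\infty}\frac{(1-aq^{2n})(a,a;q)_n(q;q)_{2n-1}\,a^nq^n}{(1-a)(q,q;q)_n(qa^2;q)_{2n}}.
\]
This single-sum form is precisely why the final identity carries only four series instead of eight: each of the four $f_1$ evaluations contributes exactly one sum, and the choice of \eqref{up*} over a pair with $\beta_n^*\neq 0$ is what keeps the count down.

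The remaining work is the substitution $q\mapsto q^6$ with $a=q^{-1},-q^{-1},q^{-5},-q^{-5}$ and the ensuing simplification. The controlling bookkeeping facts are: the numerator factor $1-aq^{12n}$ becomes $1\mp q^{12n-1}$ (for $a=\pm q^{-1}$) or $1\mp q^{12n-5}$ (for $a=\pm q^{-5}$); the prefactor $1-a$ becomes $1\mp 1/q$ or $1\mp 1/q^5$; the shifted denominator $(qa^2;q)_{2n}$ becomes $(q^6a^2;q^6)_{2n}$, which equals $(q^4;q^6)_{2n}$ when $a=\pm q^{-1}$ since $q^6q^{-2}=q^4$, but equals $(1/q^4;q^6)_{2n}$ when $a=\pm q^{-5}$ since $q^6q^{-10}=q^{-4}$; and the monomial $a^nq^n$ becomes $a^nq^{6n}$, reducing to $q^{5n}$ or $(-q)^{5n}$ for $a=\pm q^{-1}$ and to $q^{n}$ or $(-q)^{n}$ for $a=\pm q^{-5}$. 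Collecting the four specializations, multiplying the displayed decomposition through by $2$, and matching signs then yields \eqref{c3eq} term by term, with $-f_1(-q^{-1},q^6)$ supplying the first sum, $f_1(q^{-1},q^6)$ the second, $-f_1(q^{-5},q^6)$ the third, and $f_1(-q^{-5},q^6)$ the fourth.

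Since the argument is a direct substitution into an already-proved identity, there is no genuine analytic obstacle. The only real risk is a sign or exponent slip in tracking the four factors $1\pm q^{12n-1}$, $1\pm q^{12n-5}$, the accompanying $\pm$ powers of $q$, and in particular the flip from $q^4$ to $1/q^4$ in the lower denominator that distinguishes the $\pm q^{-1}$ terms from the $\pm q^{-5}$ terms. I would therefore verify each of the four specializations separately before assembling them into \eqref{c3eq}.
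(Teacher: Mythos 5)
Your proposal is correct and follows essentially the same route as the paper: both start from the decomposition $q\psi(q^2)\psi(q^6)=\tfrac{1}{2}\bigl(f_1(q^{-1},q^6)-f_1(-q^{-1},q^6)-f_1(q^{-5},q^6)+f_1(-q^{-5},q^6)\bigr)$ obtained from Ramanujan's identity \eqref{rqp2p6} via \eqref{f1f2eq}, then insert the derived pair \eqref{up*} (whose vanishing $\beta_n^*$ cuts the eight potential series down to four) into Lemma \ref{p1} and specialize $q\mapsto q^6$ at the four arguments. Your bookkeeping of the signs, the exponents $q^{5n}$ versus $q^n$, and the distinction between $(q^4;q^6)_{2n}$ and $(1/q^4;q^6)_{2n}$ matches the stated identity term by term.
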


There are a number of other identities where theta functions are
expressed in terms of certain Lambert series,
 which
may be treated similarly to derive results like those in this
section. These include \textbf{Entry 34 (p.284)} in chapter 36 of
Ramanujan's notebooks (see \cite[page 374]{B98}),
\begin{equation}\label{psieq1}
q\frac{\psi^3(q^3)}{\psi(q)}=\sum_{n=1}^{\infty}\frac{q^{3n-2}}{1-q^{6n-4}}
-
\sum_{n=1}^{\infty}\frac{q^{3n-1}}{1-q^{6n-2}}
=\frac{f_2(1/q,q^3)-f_2(1/q^2,q^3)}{2},
\end{equation}
and others.

\section{Concluding Remarks}

In the present paper and its companion \cite{McL09a} we considered limiting
cases of the two WP-Bailey chains described by Andrews in \cite{A01}.
There are a number of other WP-Bailey chains described in the literature
(see the papers of  Warnaar \cite{W03},  Liu and Ma \cite{LM08}
and Mc Laughlin and Zimmer \cite{MZ09}), and it may be that a similar
analysis of some of these chains may also have interesting consequences.

%Body of the text here. Sections should be numbered consecutively, their titles centered.
%
%References in the text should read like e.g. Baker[1]  rather than
%Baker(1971).
%
%Numbered formulae should be recalled in the text (prepared by \LaTeX) using their labels,
%rather than numbers. Formulae which are not recalled in the paper
%should not be numbered.

\vspace{2cc}

\begin{center}{\small\bf REFERENCES}
\end{center}

%Fill the list of references in the alphabetical order here.
% You may use thebibliography  instead, and/or labels.
% Adopt the style like:

\vspace{0.8cc}
\newcounter{ref}
\begin{list}{\small \arabic{ref}.}{\usecounter{ref} \leftmargin 4mm \itemsep -1mm}

\bibitem{A01}
G. E. Andrews, \emph{Bailey's transform, lemma, chains and
tree.} Special functions 2000: current perspective and future
directions (Tempe, AZ), 1--22, NATO Sci. Ser. II Math. Phys. Chem.,
\textbf{30}, Kluwer Acad. Publ., Dordrecht, 2001.

\bibitem{AB02}
G. E. Andrews, A. Berkovich, \emph{The WP-Bailey
tree and its implications.}
 J. London Math. Soc. (2) \textbf{66} (2002), no. 3, 529--549.

\bibitem{AB05}
G.E. Andrews and B.C. Berndt,
``Ramanujan's Lost Notebook, Part I", Springer, 2005.

\bibitem{ALL01}
G. E. Andrews, R. Lewis,Z. G. Liu,
\emph{An identity relating a theta function to a sum of Lambert series.}
Bull. London Math. Soc. \textbf{33} (2001), no. 1, 25--31.

%\bibitem{B47}
%Bailey, W. N., \emph{Some Identities in Combinatory Analysis.} Proc.
%London Math. Soc. \textbf{49}  (1947) 421--435.

%\bibitem{B49}
%Bailey, W. N., \emph{Identities of the Rogers-Ramanujan type.} Proc.
%London Math. Soc., \textbf{50} (1949) 1--10.

\bibitem{B91}
B. C. Berndt, ``Ramanujan's Notebooks, Part III",
Springer-Verlag, New York, 1991.

\bibitem{B94}
B. C. Berndt, ``Ramanujan's Notebooks, Part IV",
Springer-Verlag, New York, 1994.

\bibitem{B98}
B. C. Berndt, ``Ramanujan's Notebooks, Part V",
Springer-Verlag, New York, 1998.

\bibitem{BB91}
J. M. Borwein,  P. B. Borwein, \emph{A cubic counterpart of
Jacobi's identity and the AGM.} Trans. Amer. Math. Soc. \textbf{323}
(1991), no. 2, 691--701.

%\bibitem{B81}
%Bressoud, David,
%\emph{The Rogers--Ramanujan identities. (solution to problem
%74-12)}. SIAM Review. \textbf{23} (1981) 101--104.

%\bibitem{B81a}
%Bressoud, David, \emph{ Some identities for terminating $q$-series.}
%Math. Proc. Cambridge Philos. Soc. \textbf{89} (1981), no. 2,
%211--223.

%\bibitem{BIS00}
%Bressoud, D.; Ismail, M. E. H.; Stanton, D. Change of base in Bailey
%pairs. (English summary) Ramanujan J. 4 (2000), no. 4, 435--453.

%\bibitem{GR04}
%Gasper, George; Rahman, Mizan \emph{Basic hypergeometric series}.
%With a foreword by Richard Askey. Second edition. Encyclopedia of
%Mathematics and its Applications, 96. Cambridge University Press,
%Cambridge, 2004. xxvi+428 pp.

%\bibitem{J21}
%Jackson F.H., \emph{Summation of q-hypergeometric series}, Messenger
%of Math. \textbf{57} (1921), 101-112.

\bibitem{LM08}
Q. Liu; X. Ma \emph{On the Characteristic Equation of Well-Poised
Bailey Chains.} Ramanujan J. 18 (2009), no. 3, 351--370.

\bibitem{McL09a}
J. Mc Laughlin,
\emph{Some new Transformations for Bailey pairs and WP-Bailey Pairs.} Cent. Eur. J. Math. \textbf{8} (2010), no. 3, 474–-487.

%\bibitem{MSZ09}
%Mc Laughlin, James; Sills, Andrew V; Zimmer, Peter. \emph{Some implications of Chu's $_{10}\psi_{10}$
%extension of \\Bailey's $_{6}\psi_{6}$ summation formula} - submitted

%\bibitem{MZ07}
%Mc Laughlin, James; Zimmer, Peter. \emph{Some Applications of a
%Bailey-type Transformation} - submitted

%\bibitem{MZ07b}
%Mc Laughlin, James; Zimmer, Peter. \emph{Some transformations for
%terminating basic hypergeometric series} - submitted

\bibitem{MZ09}
J. Mc Laughlin, P.  Zimmer,  \emph{General
WP-Bailey Chains.} Ramanujan J. 22 (2010), no. 1, 11--31.

\bibitem{MZ09b}
J. Mc Laughlin, P. Zimmer,  \emph{Some Implications of the WP-Bailey Tree.} Adv. in Appl. Math.  \textbf{43}, no. 2, August 2009, Pages 162-175.

%\bibitem{S03}
%A. V. Sills,
 %\emph{Finite Rogers-Ramanujan type identities.}
%Electronic J. Combin. \textbf{10(1)} (2003) \#R13, 1-122.

\bibitem{S94}
U. B. Singh, \emph{A note on a transformation of Bailey.}
 Quart. J. Math. Oxford Ser. (2) \textbf{45} (1994), no. 177, 111--116.
\bibitem{S51}

L. J. Slater, \emph{A new proof of Rogers's transformations of
infinite series.} Proc. London Math. Soc. (2) 53, (1951). 460--475.

\bibitem{S52}
L. J. Slater,
 \emph{Further identities of the Rogers-Ramanujan type},
 Proc. London Math.Soc. \textbf{54} (1952) 147--167.

\bibitem{S02}
V. P. Spiridonov,
\emph{An elliptic incarnation of the Bailey chain.} Int. Math. Res.
Not. 2002, no. \textbf{37}, 1945--1977.

%\bibitem{W36}
%Watson, G. N. \emph{The Final Problem: An Account of the Mock Theta
%Functions.} J. London Math. Soc. \textbf{11}, 55-80, 1936.

\bibitem{W03}
S. O. Warnaar, \emph{Extensions of the well-poised and elliptic
well-poised Bailey lemma.} Indag. Math. (N.S.) \textbf{14} (2003),
no. 3-4, 571--588.

\end{list}

\vspace{1cc}

 %Fill author(s) affiliation(s), address(es) and emails here:

 West Chester University, 25 University Avenue, West Chester, PA 19383.\\
 jmclaughl@wcupa.edu

{\small
\noindent

}\end{document}